\documentclass[11pt]{article}

\usepackage[a4paper,margin=1in]{geometry}
\usepackage{amsmath,amssymb,amsthm,mathtools}
\usepackage{booktabs}
\usepackage{enumitem}

\newtheorem{theorem}{Theorem}[section]
\newtheorem{lemma}[theorem]{Lemma}
\newtheorem{proposition}[theorem]{Proposition}
\newtheorem{corollary}[theorem]{Corollary}
\newtheorem{remark}[theorem]{Remark}

\DeclareMathOperator{\Ree}{Re}

\title{The Sharp Sadov Constant and Local Spectral Stability for Shapiro--Diananda Cyclic Sums}
\author{Denis Sheremet\\
Independent researcher\\
\texttt{denis.a.sheremet@gmail.com}}
\date{June 2026}

\begin{document}
\maketitle

\begin{abstract}
We study cyclic Shapiro--Diananda sums
\[
        S_{n,k}(x)=\sum_{i=1}^n \frac{x_i}{x_{i+1}+\cdots+x_{i+k}},
        \qquad x_i>0,
\]
with cyclic indices.  First, we determine their exact local quadratic behaviour at the equal point.  Writing \(x_i=e^{u_i}\) and removing the common scaling direction, the Hessian becomes a circulant quadratic form.  Its Fourier symbol is
\[
        \gamma_{m,k}
        =
        \frac{|\lambda_{m,k}|^2-k\Ree\lambda_{m,k}}{k^3},
        \qquad
        \lambda_{m,k}=\sum_{j=1}^k e^{2\pi i mj/n}.
\]
Thus the equal point is a strict local minimum, quadratically degenerate, or a saddle according as \(\min_m\gamma_{m,k}\) is positive, zero, or negative.  We classify the cases \(k=2\) and \(k=3\), prove periodic equality families, and show that for every \(k\ge3\) the equal point is a saddle whenever \(n>12k\).  Second, combining Sadov's lower bound with an explicit asymptotic construction, we prove that the sharp global Sadov constant for the normalized Diananda sums equals \(\log2\).  This determines the global infimum over all admissible triples \((n,k,x)\); the fixed-\(k\) asymptotic constants remain separate problems.
\end{abstract}

\noindent\textbf{Keywords:} Shapiro inequality; Diananda cyclic sums; cyclic inequalities; local stability;\\
Fourier modes; circulant Hessian.\par
\noindent\textbf{MSC 2020:} 26D15; 42A16; 15B05; 05C50.

\section{Introduction}

The cyclic sums
\[
        \sum_{i=1}^n \frac{x_i}{x_{i+1}+x_{i+2}}
\]
are classically associated with Shapiro's cyclic inequality.  More generally, the sums
\[
        S_{n,k}(x)
        =\sum_{i=1}^n
        \frac{x_i}{x_{i+1}+\cdots+x_{i+k}}
\]
are often called Shapiro--Diananda type cyclic sums.  Their global lower bounds have a long history and are subtle: the equal point is not always globally extremal, and counterexamples are known in several ranges.  Sadov studied global lower bounds for such sums and related generalized Shapiro--Diananda and graphic sums; see, for example, \cite{Sadov2016,Sadov2021,Sadov2022}.

The present note has two parts.  The first part studies the local question:

\begin{quote}
What is the exact local quadratic behaviour of \(S_{n,k}\) near the equal point \(x_1=\cdots=x_n\)?
\end{quote}

The second part uses a different asymptotic construction to identify the sharp global Sadov constant.  This concerns the infimum over all admissible pairs \(n\ge k\) and all positive cyclic vectors; it does not determine the separate fixed-\(k\) asymptotic constants.  Thus the two parts describe two opposite regimes of the same cyclic sums: the local geometry near the equal configuration and the global concentrating regime that attains the sharp constant.

Since \(S_{n,k}\) is homogeneous, the natural local coordinates are logarithmic:
\[
        x_i=e^{u_i},
        \qquad
        \sum_{i=1}^n u_i=0.
\]
In these coordinates the equal point is \(u=0\).  We show that
\[
        S_{n,k}(e^u)
        =\frac nk+Q_{n,k}(u)+O(\|u\|^3),
\]
where \(Q_{n,k}\) is a circulant quadratic form.  This form is diagonalized by the discrete Fourier basis, and hence its sharp local quadratic coefficient is given explicitly by a finite Fourier minimum.

This gives a complete local trichotomy:
\begin{itemize}[nosep]
\item if \(C_{n,k}^{\rm loc}>0\), the equal point is a strict local minimum and a sharp quadratic stability inequality holds;
\item if \(C_{n,k}^{\rm loc}=0\), there is no positive quadratic stability constant;
\item if \(C_{n,k}^{\rm loc}<0\), the equal point is a saddle, and \(S_{n,k}(x)<n/k\) for positive \(x\) arbitrarily close to equality.
\end{itemize}

The cases \(k=2\) and \(k=3\) are worked out completely.  For \(k=2\), the answer is governed by the parity of \(n\).  For \(k=3\), the answer is arithmetic in \(n\): only finitely many \(n\) give a strict local minimum, finitely many give quadratic degeneracy, and all remaining \(n\) give a saddle.  We also exhibit exact periodic equality families: if a common period divides both \(n\) and \(k\), then \(S_{n,k}=n/k\) identically on that periodic family.  In particular, \(\gcd(n,k)=1\) is necessary for strict local stability.  Finally, for every fixed \(k\ge3\), we prove the uniform sufficient condition \(n>12k\) for the equal point to be a saddle, and we also give a sharper computable finite-grid criterion for this transition.

\section{The cyclic sums and logarithmic coordinates}

Let \(n\ge2\) and \(1\le k\le n-1\).  This restriction is used throughout the local spectral part of the paper.  In the global Sadov constant section we return to Sadov's convention \(n\ge k\ge1\), where the endpoint case \(k=n\) is also allowed.  Throughout, indices are understood modulo \(n\).  Define
\[
        S_{n,k}(x)
        =
        \sum_{i=1}^n
        \frac{x_i}{x_{i+1}+\cdots+x_{i+k}},
        \qquad x_i>0.
\]
At the equal point \(x_1=\cdots=x_n\),
\[
        S_{n,k}(x)=\frac nk.
\]
The function \(S_{n,k}\) is homogeneous of degree zero, so common scaling of all \(x_i\) does not matter.  We write
\[
        x_i=e^{u_i}
\]
and work on the hyperplane
\[
        \sum_{i=1}^n u_i=0.
\]
Equivalently, for arbitrary positive \(x\), the relevant squared distance to equality is
\[
        \sum_{i=1}^n
        \left(\log x_i-\overline{\log x}\right)^2,
        \qquad
        \overline{\log x}=\frac1n\sum_{i=1}^n \log x_i.
\]

\section{Quadratic expansion}

For \(u=(u_1,\ldots,u_n)\), put
\[
        L_i=u_{i+1}+\cdots+u_{i+k}.
\]

\begin{lemma}[Quadratic expansion]
As \(u\to0\) with \(\sum_i u_i=0\),
\[
        S_{n,k}(e^u)
        =
        \frac nk+Q_{n,k}(u)+O(\|u\|^3),
\]
where
\[
        Q_{n,k}(u)
        =
        -\frac1{k^2}\sum_{i=1}^n u_iL_i
        +
        \frac1{k^3}\sum_{i=1}^n L_i^2.
\]
Equivalently,
\[
        Q_{n,k}(u)
        =
        -\frac1{k^2}\sum_{i=1}^n
        u_i(u_{i+1}+\cdots+u_{i+k})
        +
        \frac1{k^3}\sum_{i=1}^n
        (u_{i+1}+\cdots+u_{i+k})^2.
\]
\end{lemma}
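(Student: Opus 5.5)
Write each term as \(e^{u_i}/D_i\) with
\[
        D_i=\sum_{j=1}^k e^{u_{i+j}},
\]
and expand it to second order in \(u\), uniformly in \(i\). The constraint \(\sum_i u_i=0\) will be used only once, to remove the first-order term.

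For the denominator,
\[
        D_i=k+L_i+\tfrac12 M_i+O(\|u\|^3),
        \qquad
        M_i=\sum_{j=1}^k u_{i+j}^2 .
\]
Hence
\[
        \frac{1}{D_i}=\frac1k\Bigl(1-\frac{L_i}{k}-\frac{M_i}{2k}+\frac{L_i^2}{k^2}\Bigr)+O(\|u\|^3),
\]
from \((1+\varepsilon)^{-1}=1-\varepsilon+\varepsilon^2+O(\varepsilon^3)\) with \(\varepsilon=(L_i+M_i/2)/k\). Multiplying by \(e^{u_i}=1+u_i+\tfrac12u_i^2+O(\|u\|^3)\) and keeping terms of order at most two gives
\[
        \frac{e^{u_i}}{D_i}
        =\frac1k\Bigl(1+u_i-\frac{L_i}{k}+\frac{u_i^2}{2}-\frac{M_i}{2k}-\frac{u_iL_i}{k}+\frac{L_i^2}{k^2}\Bigr)+O(\|u\|^3).
\]
There are only \(n\) terms and each remainder is controlled by \(\|u\|^3\) on a fixed neighbourhood of \(0\), so the remainders can be summed.

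Now sum over \(i\) and use cyclic reindexing:
\[
        \sum_i L_i=k\sum_i u_i,
        \qquad
        \sum_i M_i=k\sum_i u_i^2 .
\]
The linear terms give \(\frac1k\bigl(\sum_i u_i-\sum_i u_i\bigr)=0\); they cancel identically, even without the constraint. The diagonal quadratic terms \(\frac{u_i^2}{2}-\frac{M_i}{2k}\) also sum to zero. What is left is
\[
        \frac nk-\frac1{k^2}\sum_i u_iL_i+\frac1{k^3}\sum_i L_i^2 ,
\]
which is exactly \(Q_{n,k}\). The ``equivalently'' form of the statement is obtained by writing out \(L_i\).

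The only delicate point is the bookkeeping in the expansion of \(1/D_i\). One has to keep the \(\varepsilon^2\) term, which contributes \(L_i^2/k^2\), while discarding the cross term \(L_iM_i\), which is of order three. The constraint \(\sum_i u_i=0\) is not actually needed for the expansion. It only serves to fix the scaling direction, along which \(Q_{n,k}\) vanishes consistently: for \(u=c(1,\dots,1)\) one gets \(-nc^2/k+nc^2/k=0\).
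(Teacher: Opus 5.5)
Your proof is correct and follows essentially the same route as the paper. Both expand the numerator and the reciprocal denominator to second order, then cancel the linear and diagonal square terms using the cyclic identities $\sum_i L_i=k\sum_i u_i$ and $\sum_i\sum_{j=1}^k u_{i+j}^2=k\sum_i u_i^2$. Your observation that the linear terms cancel identically, so the constraint $\sum_i u_i=0$ is not needed for the expansion itself, is accurate and slightly sharper than the paper, which invokes the constraint at that step.
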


\begin{proof}
For each \(i\),
\[
        \frac{e^{u_i}}{e^{u_{i+1}}+\cdots+e^{u_{i+k}}}
        =
        \frac{1+u_i+u_i^2/2+O(\|u\|^3)}
        {k+L_i+\frac12\sum_{j=1}^k u_{i+j}^2+O(\|u\|^3)}.
\]
Expanding the reciprocal denominator gives
\[
        \frac1{k+L_i+\frac12\sum_{j=1}^k u_{i+j}^2}
        =
        \frac1k-\frac{L_i}{k^2}
        -\frac1{2k^2}\sum_{j=1}^k u_{i+j}^2
        +\frac{L_i^2}{k^3}
        +O(\|u\|^3).
\]
Multiplying by the numerator, we obtain
\begin{align*}
        \frac{e^{u_i}}{e^{u_{i+1}}+\cdots+e^{u_{i+k}}}
        ={}&
        \frac1k+\frac{u_i}{k}-\frac{L_i}{k^2}
        +\frac{u_i^2}{2k}
        -\frac{u_iL_i}{k^2} \\
        &-\frac1{2k^2}\sum_{j=1}^k u_{i+j}^2
        +\frac{L_i^2}{k^3}
        +O(\|u\|^3).
\end{align*}
After summing over \(i\), the linear part vanishes because
\[
        \sum_i L_i=k\sum_i u_i=0.
\]
The pure square terms also cancel since
\[
        \sum_{i=1}^n\sum_{j=1}^k u_{i+j}^2
        =k\sum_{i=1}^n u_i^2.
\]
The remaining second-order terms are precisely \(Q_{n,k}(u)\).
\end{proof}

\section{Fourier diagonalization and the local stability constant}

Let
\[
        \theta_m=\frac{2\pi m}{n},
        \qquad m=0,1,\ldots,n-1,
\]
and define
\[
        \lambda_{m,k}=\sum_{j=1}^k e^{ij\theta_m}.
\]

\begin{lemma}[Fourier symbol]
On the non-constant Fourier mode \(m=1,\ldots,n-1\), the quadratic form \(Q_{n,k}\) has coefficient
\[
        \gamma_{m,k}
        =
        \frac{|\lambda_{m,k}|^2-k\Ree\lambda_{m,k}}{k^3}.
\]
Consequently, on the zero-mean subspace,
\[
        Q_{n,k}(u)
        \ge
        C_{n,k}^{\rm loc}\sum_{i=1}^n u_i^2,
\]
where
\[
        C_{n,k}^{\rm loc}
        =
        \min_{1\le m\le n-1}\gamma_{m,k}.
\]
The constant is sharp.
\end{lemma}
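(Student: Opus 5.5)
The plan is to write \(Q_{n,k}\) as a circulant quadratic form and diagonalize it with the discrete Fourier transform. Let \(T\) be the cyclic shift \((Tu)_i=u_{i+1}\). Put \(A=\sum_{j=1}^k T^j\), so that \(L=Au\). By the quadratic expansion lemma,
\[
Q_{n,k}(u)=-\frac1{k^2}\langle u,Au\rangle+\frac1{k^3}\langle Au,Au\rangle .
\]
Since \(u\) is real, \(\langle u,Au\rangle=\langle u,\tfrac12(A+A^{T})u\rangle\). Hence \(Q_{n,k}(u)=\langle u,Hu\rangle\) with
\[
H=-\frac1{2k^2}(A+A^{T})+\frac1{k^3}A^{T}A,
\]
which is a real symmetric circulant matrix.

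Next I diagonalize. Use the Fourier vectors \(f_m=(e^{i\ell\theta_m})_{\ell=1}^n\). Since \(T f_m=e^{i\theta_m}f_m\), we get \(Af_m=\lambda_{m,k}f_m\) and \(A^{T}f_m=\overline{\lambda_{m,k}}f_m\), because \(A^T=A^*\) is a polynomial in \(T^{-1}\). Therefore
\[
Hf_m=\Big(-\frac{\Ree\lambda_{m,k}}{k^2}+\frac{|\lambda_{m,k}|^2}{k^3}\Big)f_m=\gamma_{m,k}f_m .
\]
The vectors \(f_0,\dots,f_{n-1}\) form an orthogonal basis of \(\mathbb C^n\), and the zero-mean subspace is exactly the span of \(f_1,\dots,f_{n-1}\).

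Expand a real zero-mean \(u\) as \(u=\frac1n\sum_{m\ne0}\hat u_m f_m\). Parseval then gives
\[
Q_{n,k}(u)=\frac1n\sum_{m=1}^{n-1}\gamma_{m,k}|\hat u_m|^2,\qquad \sum_i u_i^2=\frac1n\sum_{m=1}^{n-1}|\hat u_m|^2 .
\]
This is the precise meaning of ``coefficient \(\gamma_{m,k}\) on mode \(m\)''. The inequality \(Q_{n,k}(u)\ge C^{\rm loc}_{n,k}\|u\|^2\) follows at once by bounding each \(\gamma_{m,k}\) below by the minimum.

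For sharpness, pick \(m^*\) attaining the minimum and take the real vector
\[
u_\ell=\cos(\ell\theta_{m^*})=\Ree (f_{m^*})_\ell .
\]
It has zero mean because \(m^*\ne0\). Since \(\gamma_{n-m,k}=\gamma_{m,k}\) (as \(\lambda_{n-m,k}=\overline{\lambda_{m,k}}\)), both \(f_{m^*}\) and \(\overline{f_{m^*}}=f_{n-m^*}\) are eigenvectors of \(H\) with eigenvalue \(\gamma_{m^*,k}\). Hence so is their real part, and equality \(Q_{n,k}(u)=C^{\rm loc}_{n,k}\|u\|^2\) holds. Equivalently, \(C^{\rm loc}_{n,k}\) is the smallest eigenvalue of \(H\) restricted to the \(H\)-invariant zero-mean subspace.

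The main obstacle is only bookkeeping: passing between real and complex vectors and symmetrizing the non-symmetric term \(\langle u,Au\rangle\). This step is what produces \(\Ree\lambda_{m,k}\) rather than \(\lambda_{m,k}\) in \(\gamma_{m,k}\). The conjugate-pair symmetry \(\gamma_{n-m,k}=\gamma_{m,k}\) resolves it, together with the fact that \(H\) is real symmetric, so its eigenvectors can be chosen real within each eigenspace.
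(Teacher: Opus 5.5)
Your proof is correct and follows the same route as the paper. You write \(Q_{n,k}\) as a circulant form, diagonalize it with the Fourier vectors on which \(L=\lambda_{m,k}u\), exclude the constant mode by the zero-mean condition, and minimize over the remaining modes. Your explicit symmetrization \(H=-\frac1{2k^2}(A+A^{T})+\frac1{k^3}A^{T}A\), the Parseval identity, and the real extremal vector \(u_\ell=\cos(\ell\theta_{m^*})\) make rigorous the steps the paper only sketches: why just \(\Ree\lambda_{m,k}\) enters \(\gamma_{m,k}\), and why the bound is attained.
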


\begin{proof}
For the complex mode \(u_i=e^{i\theta_m i}\),
\[
        L_i=\sum_{j=1}^k u_{i+j}=\lambda_{m,k}u_i.
\]
The second term in \(Q_{n,k}\) contributes \(|\lambda_{m,k}|^2/k^3\) times the squared norm.  The first term contributes \(-\Ree\lambda_{m,k}/k^2\), because \(Q_{n,k}\) is real on real vectors and the real quadratic form sees the real part of the convolution eigenvalue.  Thus the coefficient is
\[
        -\frac{\Ree\lambda_{m,k}}{k^2}
        +\frac{|\lambda_{m,k}|^2}{k^3}
        =
        \frac{|\lambda_{m,k}|^2-k\Ree\lambda_{m,k}}{k^3}.
\]
The discrete Fourier basis diagonalizes all circulant quadratic forms, and the constant mode is excluded by the zero-mean condition.  The lower bound and sharpness follow by taking the minimum over non-constant modes.
\end{proof}

\begin{theorem}[Local spectral trichotomy]
Let \(1\le k\le n-1\).  Define
\[
        C_{n,k}^{\rm loc}
        =
        \min_{1\le m\le n-1}
        \frac{|\lambda_{m,k}|^2-k\Ree\lambda_{m,k}}{k^3}.
\]
Then the following hold.

\begin{enumerate}[label=(\roman*),nosep]
\item If \(C_{n,k}^{\rm loc}>0\), then the equal point is a strict local minimum modulo scaling.  More precisely, for every \(\varepsilon>0\) there is a neighbourhood of the equal point such that
\[
        S_{n,k}(x)-\frac nk
        \ge
        \left(C_{n,k}^{\rm loc}-\varepsilon\right)
        \sum_{i=1}^n
        \left(\log x_i-\overline{\log x}\right)^2.
\]
\item If \(C_{n,k}^{\rm loc}<0\), then the equal point is a saddle.  In particular, there are positive vectors \(x\) arbitrarily close to equality such that
\[
        S_{n,k}(x)<\frac nk.
\]
\item If \(C_{n,k}^{\rm loc}=0\), then there is no positive quadratic stability constant at the equal point.
\end{enumerate}
\end{theorem}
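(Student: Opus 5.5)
The plan is to combine the Quadratic expansion lemma with the Fourier symbol lemma, after first reducing an arbitrary positive vector to logarithmic coordinates on the zero-mean hyperplane. For positive \(x\), set \(u_i=\log x_i-\overline{\log x}\). Then \(\sum_i u_i=0\), \(\|u\|^2=\sum_i(\log x_i-\overline{\log x})^2\), and, since \(S_{n,k}\) is homogeneous of degree zero, \(S_{n,k}(x)=S_{n,k}(e^u)\). A neighbourhood of the equal point modulo scaling therefore corresponds to a ball \(\|u\|<\delta\) in the hyperplane. With this reduction, every statement becomes a statement about \(F(u)=S_{n,k}(e^u)-n/k\) near \(u=0\). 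By the Quadratic expansion lemma, \(F(u)=Q_{n,k}(u)+R(u)\) with \(|R(u)|\le M\|u\|^3\) for \(\|u\|\le\delta_0\). Here \(M\) is a uniform constant, since \(F\) is smooth (indeed analytic) near \(0\) and Taylor's theorem with remainder gives the uniform bound.

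For (i), I apply the Fourier symbol lemma: \(Q_{n,k}(u)\ge C^{\rm loc}_{n,k}\|u\|^2\) on the zero-mean subspace. Then \(F(u)\ge (C^{\rm loc}_{n,k}-M\|u\|)\|u\|^2\). Given \(\varepsilon>0\), choose \(\delta=\min(\delta_0,\varepsilon/M)\); this yields the stated inequality on that neighbourhood. Strictness of the minimum modulo scaling follows by taking \(\varepsilon<C^{\rm loc}_{n,k}\), so that \(F(u)>0\) for \(0<\|u\|<\delta\).

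For (ii), let \(m\) attain the minimum, so that \(\gamma_{m,k}<0\). The only point needing care is that the lemma's extremal mode is complex, so I need a real test vector. I take \(v_i=\cos(\theta_m i)\). This vector is real, nonzero, and has zero mean since \(m\neq0\). It lies in the real span of the modes \(m\) and \(n-m\), and these share the same symbol because \(\lambda_{n-m,k}=\overline{\lambda_{m,k}}\). Hence \(Q_{n,k}(v)=\gamma_{m,k}\|v\|^2\), which can also be checked directly by expanding \(\cos\) into the two exponentials and noting that the cross terms combine consistently. Setting \(u=tv\) gives \(F(tv)\le t^2\|v\|^2(\gamma_{m,k}+Mt\|v\|)<0\) for small \(t>0\). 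So \(x=e^{tv}\) satisfies \(S_{n,k}(x)<n/k\) and is arbitrarily close to equality. For the saddle claim I also need directions of increase. Since \(\sum_{m'}\gamma_{m',k}\) need not be positive, I use instead the mode \(m'\) maximizing \(\gamma_{m',k}\). To show this maximum is positive, I argue that \(\sum_{m=0}^{n-1}\gamma_{m,k}\) equals \(\frac1{k^3}\) times \(n\cdot\#\{\text{pairs}\}\) minus a term involving the \(j=0\) Fourier coefficient. Parseval gives \(\sum_m|\lambda_m|^2=nk\), and \(\sum_m\Ree\lambda_m=0\) since \(k<n\). Hence \(\sum_{m\ge1}\gamma_{m,k}=(nk-k^2)/k^3>0\), so some mode has positive coefficient, and the same argument with \(u=tv'\) gives \(F>0\) along that direction.

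For (iii), I argue by contradiction. Suppose \(F(u)\ge c\|u\|^2\) near \(0\) with \(c>0\). Take a real mode \(v\) built as above from an \(m\) with \(\gamma_{m,k}=0\), so \(Q_{n,k}(v)=0\). Then \(c t^2\|v\|^2\le F(tv)\le Mt^3\|v\|^3\), which fails as \(t\to0\). I expect the main obstacle to be precisely the careful passage from the complex-mode symbol to real test vectors, namely verifying that \(Q_{n,k}(\cos\text{-mode})=\gamma_{m,k}\|v\|^2\). This is where I would redo the computation explicitly using the symmetry \(\gamma_{n-m,k}=\gamma_{m,k}\).
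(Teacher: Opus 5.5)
Your proposal is correct and follows the paper's proof: a uniform cubic Taylor remainder plus the Fourier lower bound gives (i), and testing along a real Fourier mode gives (ii) and (iii). Your trace argument producing an ascent direction is a genuine addition, since the paper's proof only exhibits descent directions and never justifies \emph{saddle} in the strict sense. There is one small arithmetic slip in it: because $\lambda_{0,k}=k$, you get $\sum_{m=1}^{n-1}\Ree\lambda_{m,k}=-k$ rather than $0$, so $\sum_{m=1}^{n-1}\gamma_{m,k}=nk/k^3=n/k^2$, not $(nk-k^2)/k^3$. This is the trace of the matrix of $Q_{n,k}$, whose diagonal entries all equal $1/k^2$ and whose constant-mode eigenvalue is $0$. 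Only positivity is used, so your conclusion stands.
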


\begin{proof}
The first assertion follows from the expansion
\[
        S_{n,k}(e^u)-\frac nk
        =Q_{n,k}(u)+O(\|u\|^3)
\]
and the Fourier lower bound on \(Q_{n,k}\).  The error term is bounded by \(\varepsilon\|u\|^2\) in a sufficiently small neighbourhood.

If \(C_{n,k}^{\rm loc}<0\), choose a real Fourier mode \(u\) with \(Q_{n,k}(u)<0\).  Then
\[
        S_{n,k}(e^{tu})-\frac nk
        =t^2Q_{n,k}(u)+O(t^3)<0
\]
for all sufficiently small nonzero \(t\).

If \(C_{n,k}^{\rm loc}=0\), a positive quadratic lower bound cannot hold, because a Fourier mode attaining the zero coefficient would violate it to second order.  This is what we mean below by quadratic degeneracy; no assertion about the first nonzero higher-order term is made unless an explicit equality family is exhibited.
\end{proof}

It is often useful to write the symbol as a function of \(x=m/n\).  Since
\[
        \lambda_{m,k}
        =
        e^{\pi i(k+1)x}\frac{\sin(\pi kx)}{\sin(\pi x)},
        \qquad x=\frac mn,
\]
put
\[
        A_k(x)=\frac{\sin(\pi kx)}{\sin(\pi x)}.
\]
Then
\[
        \gamma_{m,k}=\Gamma_k\left(\frac mn\right),
\]
where
\[
        \Gamma_k(x)
        =
        \frac{A_k(x)\left(A_k(x)-k\cos(\pi(k+1)x)\right)}{k^3}.
\]
Thus the general local problem is reduced to evaluating the explicit function \(\Gamma_k\) on the finite Fourier grid
\[
        x=\frac1n,\frac2n,\ldots,\frac{n-1}{n}.
\]

\section{Periodic equality families}

The spectral degeneracies detected by the Fourier symbol are not merely infinitesimal in many cases.  They come from exact periodic equality families.

\begin{theorem}[Periodic equality families]
Let \(1\le k\le n-1\), and suppose that \(d\mid n\) and \(d\mid k\).  If
\[
        x_{i+d}=x_i
        \qquad
        (i=1,\ldots,n),
\]
then
\[
        S_{n,k}(x)=\frac nk.
\]
In particular, if \(1<d<n\), this gives a nonconstant family of equality points.
\end{theorem}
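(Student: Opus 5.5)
The argument is a direct computation; no earlier result is needed beyond the definition of \(S_{n,k}\). Suppose \(x\) is \(d\)-periodic, with \(d\mid n\) and \(d\mid k\). Write \(k=qd\). For each \(i\), the window \(x_{i+1},\ldots,x_{i+k}\) consists of \(q\) consecutive blocks of length \(d\). By periodicity, each block contains every residue class modulo \(d\) exactly once. Hence every denominator equals the same constant,
\[
        x_{i+1}+\cdots+x_{i+k}=q\,P,
        \qquad P=\sum_{r=1}^{d}x_r ,
\]
and this value does not depend on \(i\). Here the cyclic convention matters, since indices are taken modulo \(n\). Because \(d\mid n\), periodicity modulo \(d\) is compatible with the cyclic indexing, so the block decomposition is valid even when the window wraps around.

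Next, I sum the numerators. Since the denominators are all equal to \(qP\),
\[
        S_{n,k}(x)=\frac{1}{qP}\sum_{i=1}^n x_i=\frac{1}{qP}\cdot\frac nd\,P=\frac{n}{qd}=\frac nk.
\]
The middle equality uses that \(\{1,\ldots,n\}\) splits into \(n/d\) full periods, each of which sums to \(P\).

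For the final assertion, take \(1<d<n\) and choose \(x_1,\ldots,x_d\) positive and not all equal; for instance \(x_1\ne x_2\), which is possible because \(d\ge2\). The periodic extension is then a nonconstant positive vector. It is also not a scalar multiple of the equal vector, so it is a genuine nonconstant equality point modulo scaling. As the \(x_r\) vary, these points form a \((d-1)\)-parameter family modulo scaling.

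I do not expect any real obstacle. The only point requiring care is the wrap-around of indices, and \(d\mid n\) takes care of it. The hypothesis \(d<n\) is automatic here, since \(d\mid k\) and \(k\le n-1\) give \(d\le k<n\).
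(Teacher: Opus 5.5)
Your proof is correct and is essentially the paper's argument: every length-$k$ window consists of $k/d$ full periods, so each denominator equals $(k/d)(x_1+\cdots+x_d)$, and summing the numerators over the $n/d$ periods gives $n/k$. Your additional remarks on wrap-around, and on $d<n$ following automatically from $d\mid k\le n-1$, are correct small refinements.
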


\begin{proof}
Let
\[
        T=x_1+\cdots+x_d.
\]
Since \(x\) has period \(d\), every block of \(k\) consecutive terms contains exactly \(k/d\) complete periods.  Hence, for every \(i\),
\[
        x_{i+1}+\cdots+x_{i+k}
        =
        \frac{k}{d}T.
\]
Therefore
\[
        \frac{x_i}{x_{i+1}+\cdots+x_{i+k}}
        =
        \frac{d}{k}\frac{x_i}{T}.
\]
Summing over one period gives
\[
        \sum_{i=1}^d
        \frac{x_i}{x_{i+1}+\cdots+x_{i+k}}
        =
        \frac{d}{k}
        \frac{x_1+\cdots+x_d}{T}
        =
        \frac dk.
\]
The full cycle consists of \(n/d\) periods.  Hence
\[
        S_{n,k}(x)
        =
        \frac nd\cdot\frac dk
        =
        \frac nk.
\]
\end{proof}

\begin{corollary}[A periodic obstruction to strict stability]
If \(\gcd(n,k)>1\), then the equal point cannot be a strict local minimum of \(S_{n,k}\) modulo scaling.  More precisely, there are nonconstant positive vectors \(x\), arbitrarily close to the equal point, such that
\[
        S_{n,k}(x)=\frac nk.
\]
Consequently, \(\gcd(n,k)=1\) is a necessary condition for strict local quadratic stability.
\end{corollary}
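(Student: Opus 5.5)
The corollary follows directly from the periodic equality families theorem, applied with the common divisor \(d=\gcd(n,k)\). Since \(1\le k\le n-1\) and \(d\mid k\), we have \(d\le k<n\), and by hypothesis \(d>1\). So \(1<d<n\), and the theorem gives \(S_{n,k}(x)=n/k\) for every positive \(d\)-periodic vector \(x\).

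The plan is to build the required vectors explicitly inside this family. Fix any nonconstant vector \(v=(v_1,\ldots,v_d)\in\mathbb{R}^d\), for instance \(v=(1,0,\ldots,0)\); this is nonconstant because \(d\ge2\). For \(t>0\) define
\[
        x_i(t)=e^{t v_{i \bmod d}},\qquad i=1,\ldots,n,
\]
with representatives taken in \(\{1,\ldots,d\}\). This is well defined on the cycle of length \(n\) because \(d\mid n\), and it is \(d\)-periodic. Each \(x(t)\) is positive and nonconstant. In the logarithmic coordinates of Section 2, the centred vector \(u=\log x(t)-\overline{\log x(t)}\) equals \(t\) times a fixed nonzero vector. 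Hence the squared distance \(\sum_i(\log x_i-\overline{\log x})^2\) is a positive constant multiple of \(t^2\). It tends to \(0\) as \(t\to0^+\) but is never zero, so these points are arbitrarily close to the equal point and distinct from it modulo scaling.

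Since \(S_{n,k}(x(t))=n/k\) along this whole curve, the equal point fails to be a strict local minimum modulo scaling: every neighbourhood contains non-equivalent points with the same value. For the final sentence, note that a strict local quadratic stability inequality of the form
\[
        S_{n,k}(x)-\frac nk\ge c\sum_i(\log x_i-\overline{\log x})^2,\qquad c>0,
\]
near equality would force \(S_{n,k}(x(t))>n/k\) for small \(t>0\). That contradicts the equality above, so \(\gcd(n,k)=1\) is necessary. No step is a real obstacle. The only point needing care is checking \(d<n\), which uses the standing restriction \(k\le n-1\), and noting that nonconstancy survives passing to the scaling quotient.
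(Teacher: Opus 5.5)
Your proposal is correct and follows the same route as the paper: apply the periodic equality families theorem with \(d=\gcd(n,k)\) and take nonconstant \(d\)-periodic positive vectors approaching the constant vector. Your explicit curve \(x(t)=e^{tv}\) fills in details the paper leaves implicit, namely that these vectors are arbitrarily close to equality and distinct from it modulo scaling, via the observation that the centred logarithm is \(t\) times a fixed nonzero vector.
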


\begin{proof}
Let \(d=\gcd(n,k)>1\).  Then \(d\mid n\) and \(d\mid k\).  By the theorem, every positive \(d\)-periodic vector satisfies
\[
        S_{n,k}(x)=\frac nk.
\]
Taking nonconstant \(d\)-periodic vectors arbitrarily close to the constant vector gives the claim.
\end{proof}

\begin{remark}
For \(k=2\) and even \(n\), this recovers the alternating equality family \((a,b,a,b,\ldots)\).  For \(k=3\) and \(3\mid n\), it gives the exact period-three equality family \((a,b,c,a,b,c,\ldots)\).  Thus several zero Fourier modes have a global equality explanation.
\end{remark}

\section{General consequence for fixed \(k\ge3\)}

The preceding formula implies that for every fixed \(k\ge3\), the equal point is eventually a saddle as \(n\to\infty\).

\begin{proposition}
For every fixed \(k\ge3\), there exists \(N_k\) such that for all \(n\ge N_k\),
\[
        C_{n,k}^{\rm loc}<0.
\]
Hence, for every fixed \(k\ge3\) and all sufficiently large \(n\), the equal point is a saddle of \(S_{n,k}\).
\end{proposition}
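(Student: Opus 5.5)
The plan is to find a fixed point \(x_*\in(0,1)\) with \(\Gamma_k(x_*)<0\), and then use continuity of \(\Gamma_k\) together with density of the Fourier grid \(\{m/n\}\) as \(n\to\infty\). By the reformulation after the Local spectral trichotomy, \(\gamma_{m,k}=\Gamma_k(m/n)\). The numerator \(A_k(x)\bigl(A_k(x)-k\cos(\pi(k+1)x)\bigr)\) is continuous on \((0,1)\), since \(A_k\) extends smoothly. So if \(\Gamma_k<0\) on an open interval \(I\subset(0,1)\) of length \(\delta>0\), then every \(n\) with \(n>1/\delta\) has some grid point \(m/n\in I\), \(1\le m\le n-1\). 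For such \(n\), \(C_{n,k}^{\rm loc}\le\Gamma_k(m/n)<0\). Taking \(N_k=\lceil 1/\delta\rceil+1\) gives the claim, and the saddle conclusion follows from part (ii) of the trichotomy.

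It remains to exhibit \(x_*\). We need \(A_k(x_*)\) and \(A_k(x_*)-k\cos(\pi(k+1)x_*)\) of opposite signs, i.e. \(0<A_k<k\cos(\pi(k+1)x)\) or the reverse. I would work near \(x=1/k\), where \(A_k\) vanishes; the aim is to show that \(\Gamma_k\) takes both signs of product close to it. At \(x=1/k\) we have \(\cos(\pi(k+1)/k)=\cos(\pi+\pi/k)=-\cos(\pi/k)<0\) for \(k\ge3\), because \(\pi/k<\pi/2\). Hence \(A_k-k\cos(\pi(k+1)x)\) is approximately \(k\cos(\pi/k)>0\) near \(x=1/k\). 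Just to the right of \(1/k\), \(\sin(\pi kx)<0\) while \(\sin(\pi x)>0\), so \(A_k<0\). For \(x\in(1/k,1/k+\eta)\) with small \(\eta\), the first factor is therefore negative and the second positive, so \(\Gamma_k(x)<0\) on a whole open interval. Concretely, I would take \(I=(1/k,\,1/k+\eta)\), with \(\eta\) small enough (e.g. \(\eta\le 1/(2k)\)) that \(|A_k|\) stays below \(k\cos(\pi/k)/2\) and the cosine stays near \(-\cos(\pi/k)\). The role of \(k\ge3\) is exactly the strict inequality \(\cos(\pi/k)>0\); for \(k=2\) the cosine vanishes at \(x=1/2\) and this argument breaks down, which is consistent with the parity classification.

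The only genuine obstacle is making \(\eta\) explicit enough to be convincing, since the existence statement does not need a sharp \(N_k\). A quantitative version, aimed at something like \(n>12k\), would bound \(|A_k(x)|\le |\sin(\pi kx)|/\sin(\pi x)\) using \(\sin(\pi x)\ge 2x\) on \([0,1/2]\). It would also bound the cosine's deviation by its Lipschitz constant \(\pi(k+1)\), checking that an interval of length of order \(1/(2k)\) past \(1/k\) works. For the proposition as stated, I would stop at the qualitative continuity argument.
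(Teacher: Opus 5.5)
Your argument is correct and is essentially the paper's proof. In both, $A_k<0$ just to the right of $1/k$, while $A_k-k\cos(\pi(k+1)x)$ stays near $k\cos(\pi/k)>0$ there, so $\Gamma_k<0$ on some interval $(1/k,1/k+\eta)$, and the grid $\{m/n\}$ eventually meets that interval.

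One slip that does not affect the argument: your example ``$\eta\le 1/(2k)$'' is not sufficient, and neither is the ``length of order $1/(2k)$'' in your quantitative aside. For $k=3$ the negative interval is only $(1/3,\tau)$ with $\tau\approx0.366$, and $\Gamma_3(1/2)=4/27>0$. The qualitative continuity argument you actually rely on does not need that example.
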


\begin{proof}
Consider
\[
        \Gamma_k(x)
        =
        \frac{A_k(x)\left(A_k(x)-k\cos(\pi(k+1)x)\right)}{k^3},
        \qquad
        A_k(x)=\frac{\sin(\pi kx)}{\sin(\pi x)}.
\]
At
\[
        x_0=\frac1k
\]
one has \(A_k(x_0)=0\).  Moreover
\[
        A_k'(x_0)=\frac{\pi k\cos \pi}{\sin(\pi/k)}<0,
\]
so \(A_k(x)<0\) for \(x>x_0\) sufficiently close to \(x_0\).  Also
\[
        \cos(\pi(k+1)x_0)
        =
        \cos\left(\pi+\frac{\pi}{k}\right)
        =
        -\cos\frac{\pi}{k}<0.
\]
Therefore, for \(x>x_0\) sufficiently close to \(x_0\),
\[
        A_k(x)-k\cos(\pi(k+1)x)>0,
\]
and hence
\[
        \Gamma_k(x)<0.
\]
Thus there is an interval \((1/k,1/k+\varepsilon_k)\) on which \(\Gamma_k<0\).  For all sufficiently large \(n\), the grid \(\{m/n:1\le m\le n-1\}\) intersects this interval.  For such \(n\) there is a Fourier mode with negative coefficient, so \(C_{n,k}^{\rm loc}<0\).
\end{proof}

\section{A uniform eventual-saddle theorem}

The preceding proposition proves eventual saddle behaviour for each fixed \(k\ge3\).  We now give an explicit uniform version.  The constant is not optimized; its purpose is to give a simple closed threshold independent of numerical root finding.

\begin{lemma}[A uniform negative interval]
For every \(k\ge3\),
\[
        \Gamma_k(x)<0
        \qquad
        \text{whenever}
        \qquad
        \frac1k<x<\frac{13}{12k}.
\]
\end{lemma}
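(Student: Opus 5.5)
Write $x=(1+t)/k$ with $0<t<1/12$, so that $\pi kx=\pi+\pi t$ and $\pi x\in(\pi/k,\,13\pi/(12k))$. Since $\Gamma_k(x)=A_k(x)\bigl(A_k(x)-k\cos(\pi(k+1)x)\bigr)/k^3$, it suffices to prove two sign facts on this interval:
\[
A_k(x)<0 \qquad\text{and}\qquad A_k(x)-k\cos(\pi(k+1)x)>0 .
\]

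\emph{First factor.} We have $\sin(\pi kx)=\sin(\pi+\pi t)=-\sin(\pi t)<0$. Also $0<\pi x<13\pi/36<\pi$ because $k\ge3$, so $\sin(\pi x)>0$. Hence $A_k(x)<0$, and
\[
|A_k(x)|=\frac{\sin(\pi t)}{\sin(\pi x)}.
\]

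\emph{Second factor.} Write $\pi(k+1)x=\pi+\pi t+\pi x$. Then
\[
-k\cos(\pi(k+1)x)=k\cos(\pi t+\pi x),
\]
and $\pi t+\pi x\le \pi/12+13\pi/36=4\pi/9<\pi/2$. So this term is positive, with
\[
k\cos(\pi t+\pi x)\ge k\cos(4\pi/9)\approx 0.1736\,k.
\]
It remains to show $|A_k(x)|<k\cos(\pi t+\pi x)$. Use $\sin(\pi t)\le \pi t<\pi/12$ and the concavity bound $\sin y\ge (2/\pi)y$ on $[0,\pi/2]$. This gives $\sin(\pi x)\ge 2x>2/k$, hence
\[
|A_k(x)|<\frac{(\pi/12)\,k}{2}=\frac{\pi k}{24}\approx 0.131\,k .
\]
Since $0.131<0.1736$, the second factor is positive.

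The product of a negative and a positive factor is negative, so $\Gamma_k(x)<0$.

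The main obstacle is making the constants close with margin, because the crude bounds used above are fairly lossy. If the $\cos(4\pi/9)$ margin turns out too thin, the first fallback is to keep the dependence on $t$: use $\sin(\pi t)\le\pi t$ together with $\cos(\pi t+\pi x)\ge\cos(\pi t+13\pi/(12k))$. The second fallback is to handle $k=3$ separately and use the better bound $\pi x\le 13\pi/(12k)$ for $k\ge4$.
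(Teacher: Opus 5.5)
Your proof is correct and is essentially the paper's argument. You use the same substitution $x=(1+t)/k$, the same two sign facts $A_k(x)<0$ and $A_k(x)-k\cos(\pi(k+1)x)>0$, the same rewriting $\pi(k+1)x=\pi+\delta$ with $\delta\le 4\pi/9$, and the same bound on $|A_k(x)|$ via $\sin y\le y$ and $\sin y\ge 2y/\pi$.

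The only difference is that the paper keeps the factor $1/(1+t)$ to get $|A_k(x)|<\pi k/26$, where you get $\pi k/24$. Both are below $k\cos(4\pi/9)\approx 0.1736\,k$, so your fallbacks are not needed.
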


\begin{proof}
Write
\[
        x=\frac{1+s}{k},
        \qquad
        0<s<\frac1{12}.
\]
Then
\[
        A_k(x)
        =
        \frac{\sin(\pi(1+s))}{\sin(\pi(1+s)/k)}
        =
        -\frac{\sin(\pi s)}{\sin(\pi(1+s)/k)}<0.
\]
It remains to show that
\[
        A_k(x)-k\cos(\pi(k+1)x)>0.
\]
Put
\[
        \delta=\pi\left(s+\frac{1+s}{k}\right).
\]
Since
\[
        \pi(k+1)x
        =
        \pi(1+s)\left(1+\frac1k\right)
        =
        \pi+\delta,
\]
we have
\[
        \cos(\pi(k+1)x)=-\cos\delta.
\]
Hence
\[
        A_k(x)-k\cos(\pi(k+1)x)
        =
        A_k(x)+k\cos\delta.
\]
For \(k\ge3\) and \(0<s<1/12\),
\[
        0<\frac{\pi(1+s)}{k}\le \frac{13\pi}{36}<\frac\pi2.
\]
Using \(\sin y\ge 2y/\pi\) on \([0,\pi/2]\) and \(\sin y\le y\), we get
\[
        |A_k(x)|
        \le
        \frac{\pi s}{2(1+s)/k}
        =
        \frac{\pi k}{2}\frac{s}{1+s}
        <
        \frac{\pi k}{26}.
\]
Also
\[
        \delta
        =
        \pi\left(s+\frac{1+s}{k}\right)
        \le
        \pi\left(\frac1{12}+\frac{13}{36}\right)
        =
        \frac{4\pi}{9}.
\]
Thus
\[
        \cos\delta\ge \cos\frac{4\pi}{9}.
\]
Since
\[
        \cos\frac{4\pi}{9}>\frac\pi{26}
\]
(the numerical values are \(0.173648\ldots\) and \(0.120830\ldots\), respectively), it follows that
\[
        A_k(x)+k\cos\delta
        \ge
        k\left(\cos\frac{4\pi}{9}-\frac\pi{26}\right)>0.
\]
Therefore
\[
        A_k(x)<0,
        \qquad
        A_k(x)-k\cos(\pi(k+1)x)>0,
\]
and consequently
\[
        \Gamma_k(x)<0.
\]
\end{proof}

\begin{theorem}[Uniform eventual saddle bound]
Let \(k\ge3\).  If
\[
        n>12k,
\]
then
\[
        C_{n,k}^{\rm loc}<0.
\]
Hence the equal point is a saddle point of \(S_{n,k}\).  In particular, there are positive vectors arbitrarily close to the equal point for which
\[
        S_{n,k}(x)<\frac nk.
\]
\end{theorem}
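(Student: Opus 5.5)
The plan is to combine the uniform negative interval lemma with a simple pigeonhole observation about the Fourier grid, and then invoke the local spectral trichotomy. By the lemma, \(\Gamma_k(x)<0\) for every \(x\) in the open interval \(I_k=(1/k,\,13/(12k))\), whose length is \(1/(12k)\). It therefore suffices to produce an integer \(m\) with \(1\le m\le n-1\) and \(m/n\in I_k\). For that \(m\) we get \(\gamma_{m,k}=\Gamma_k(m/n)<0\), so \(C_{n,k}^{\rm loc}\le\gamma_{m,k}<0\).

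To find \(m\), equivalently look for an integer strictly between \(n/k\) and \(13n/(12k)\). This interval has length \(n/(12k)>1\) when \(n>12k\), and an open interval of length exceeding \(1\) always contains an integer. Concretely, take \(m=\lfloor n/k\rfloor+1\). Then \(m>n/k\), and \(m\le n/k+1<n/k+n/(12k)=13n/(12k)\).

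Next comes the range check \(1\le m\le n-1\). Clearly \(m\ge1\). Since \(k\ge3\), we have \(m<13n/(12k)\le 13n/36<n\), so \(m\le n-1\). Hence \(m/n\) is a genuine nonconstant grid point \(x=m/n\). Recall the identification \(\gamma_{m,k}=\Gamma_k(m/n)\), which comes from writing \(\lambda_{m,k}=e^{\pi i(k+1)x}A_k(x)\) with \(|\lambda_{m,k}|^2=A_k^2\) and \(\Ree\lambda_{m,k}=A_k\cos(\pi(k+1)x)\). This yields \(C_{n,k}^{\rm loc}<0\).

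Finally, part (ii) of the local spectral trichotomy gives the saddle conclusion and the existence of positive \(x\) arbitrarily close to equality with \(S_{n,k}(x)<n/k\). Explicitly, take the real mode \(u_i=\cos(2\pi mi/n)\), which has zero mean and \(Q_{n,k}(u)<0\), and set \(x=e^{tu}\) with small \(t\neq0\). There is no real obstacle here; the only point needing care is the strictness in the choice of \(m\), namely that \(n>12k\) (strict) is exactly what makes \(\lfloor n/k\rfloor+1\) land strictly below the right endpoint \(13n/(12k)\), together with the check that \(m\) is not \(0\) or \(n\).
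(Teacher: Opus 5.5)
Your proposal is correct and follows the paper's proof: you apply the uniform negative interval lemma, find an integer in \((n/k,13n/(12k))\) from the length bound \(n/(12k)>1\), and then invoke the saddle part of the trichotomy. You also choose \(m=\lfloor n/k\rfloor+1\) explicitly and check that \(1\le m\le n-1\); the paper leaves that range check implicit.
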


\begin{proof}
By the preceding lemma, \(\Gamma_k(x)<0\) on
\[
        \left(\frac1k,\frac{13}{12k}\right).
\]
If \(n>12k\), then the interval
\[
        \left(\frac nk,\frac{13n}{12k}\right)
\]
has length
\[
        \frac{n}{12k}>1.
\]
Therefore it contains an integer \(m\).  For this integer,
\[
        \frac1k<\frac mn<\frac{13}{12k},
\]
so
\[
        \gamma_{m,k}=\Gamma_k\left(\frac mn\right)<0.
\]
Thus
\[
        C_{n,k}^{\rm loc}
        =
        \min_{1\le m\le n-1}\gamma_{m,k}<0,
\]
and the local saddle conclusion follows from the spectral stability theorem.
\end{proof}

\begin{remark}
The bound \(n>12k\) is only a simple uniform sufficient condition.  The sharper finite-grid criterion below gives substantially better thresholds for small and moderate \(k\).  The point of the uniform theorem is qualitative: for every denominator length \(k\ge3\), the equal point is eventually locally unstable once the cycle length is large enough.
\end{remark}

\section{A quantitative saddle criterion for arbitrary \(k\)}

The preceding proposition is qualitative.  We now record a computable quantitative version.  Let
\[
        \Gamma_k(x)
        =
        \frac{A_k(x)\left(A_k(x)-k\cos(\pi(k+1)x)\right)}{k^3},
        \qquad
        A_k(x)=\frac{\sin(\pi kx)}{\sin(\pi x)}.
\]
For \(k\ge3\), define \(\beta_k\) to be the right endpoint of the first negative interval of \(\Gamma_k\) after \(1/k\):
\[
        \beta_k
        =
        \sup\left\{
        b>\frac1k:
        \Gamma_k(x)<0\ \text{for all}\ x\in\left(\frac1k,b\right)
        \right\}.
\]
The proof above shows that \(\beta_k>1/k\).  Therefore the following criterion is immediate.

\begin{corollary}[A finite-grid saddle criterion]
Let \(k\ge3\).  If there is an integer \(m\) such that
\[
        \frac nk<m<\beta_k n,
\]
then
\[
        C_{n,k}^{\rm loc}<0.
\]
In particular, if
\[
        n\left(\beta_k-\frac1k\right)>1,
\]
then the equal point is a saddle.
\end{corollary}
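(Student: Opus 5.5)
The plan is to reduce the criterion to the identity $\gamma_{m,k}=\Gamma_k(m/n)$ and to the definition of $\beta_k$, following the uniform eventual-saddle theorem but with the explicit interval $(1/k,13/(12k))$ replaced by $(1/k,\beta_k)$.

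First I check that $\Gamma_k<0$ on the whole open interval $(1/k,\beta_k)$. The set of admissible $b$ in the definition of $\beta_k$ is nonempty: by the uniform negative interval lemma it contains $13/(12k)$. It is also bounded, say by the first zero of $A_k$ after $1/k$, or simply by $1$. Its elements form an initial segment: if $\Gamma_k<0$ on $(1/k,b)$ and $1/k<b'<b$, then $\Gamma_k<0$ on $(1/k,b')$. Given $x\in(1/k,\beta_k)$, choose an admissible $b$ with $x<b\le\beta_k$, which exists by the definition of the supremum. Then $\Gamma_k(x)<0$.

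Second, let $m$ be an integer with $n/k<m<\beta_k n$. Dividing by $n$ gives $1/k<m/n<\beta_k$, so $\Gamma_k(m/n)<0$. I also need $1\le m\le n-1$. The lower bound holds because $m>n/k>0$. For the upper bound I use $\beta_k\le1$: $\Gamma_k$ is defined via $\sin(\pi x)$ only for non-integer $x$, so the negative interval cannot extend past $x=1$. Hence $m<n$. By the Fourier symbol lemma, $\gamma_{m,k}=\Gamma_k(m/n)<0$, so $C^{\rm loc}_{n,k}<0$. The local spectral trichotomy then gives the saddle conclusion.

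Third, the ``in particular'' clause. If $n(\beta_k-1/k)>1$, the open interval $(n/k,\beta_k n)$ has length greater than $1$ and therefore contains an integer. The previous step then applies, exactly as in the proof of the uniform eventual saddle bound.

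There is no real obstacle. The only delicate points are the bookkeeping that $\beta_k$ is finite, which keeps $m\le n-1$, and that $\Gamma_k$ is negative on the full open interval rather than only near $1/k$. Both follow directly from the supremum definition and the domain of $\Gamma_k$.
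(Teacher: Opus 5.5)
Your proof is correct and follows the paper's argument: you rescale to $1/k<m/n<\beta_k$, invoke the definition of $\beta_k$ to get $\gamma_{m,k}=\Gamma_k(m/n)<0$, and use an interval of length greater than one for the ``in particular'' clause. Your extra bookkeeping is a welcome addition that the paper leaves implicit: you show negativity on all of $(1/k,\beta_k)$ via the supremum, and you check $1\le m\le n-1$. For the bound $m\le n-1$, your first justification is the cleaner one, namely $\beta_k\le 2/k<1$ because $A_k(2/k)=0$ forces $\Gamma_k(2/k)=0$; the domain-of-definition argument at $x=1$ is weaker, since $\Gamma_k$ extends continuously there.
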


\begin{proof}
The condition \(n/k<m<\beta_kn\) is exactly
\[
        \frac1k<\frac mn<\beta_k.
\]
By the definition of \(\beta_k\), this implies
\[
        \Gamma_k\left(\frac mn\right)<0.
\]
Hence one Fourier coefficient is negative, so
\[
        C_{n,k}^{\rm loc}<0.
\]
If \(n(\beta_k-1/k)>1\), then the interval \((n/k,\beta_k n)\) has length greater than one and therefore contains an integer.
\end{proof}

For the first few values of \(k\), the endpoint \(\beta_k\) and the corresponding sufficient threshold are as follows.
\[
\begin{array}{c|c|c}
\toprule
k & \beta_k & 1/(\beta_k-1/k)\\
\midrule
3 & 0.3661398 & 30.48\\
4 & 0.2902153 & 24.87\\
5 & 0.2407228 & 24.56\\
6 & 0.2057783 & 25.57\\
7 & 0.1797488 & 27.11\\
8 & 0.1595926 & 28.91\\
9 & 0.1435163 & 30.86\\
10 & 0.1303913 & 32.90\\
\bottomrule
\end{array}
\]
Thus, for example, for \(k=4\) the general criterion already implies saddle behaviour for all \(n\ge25\), and for \(k=5\) it implies saddle behaviour for all \(n\ge25\).

\begin{remark}
The finite-grid formula
\[
        C_{n,k}^{\rm loc}=\min_{1\le m\le n-1}\Gamma_k(m/n)
\]
remains the complete criterion for every pair \((n,k)\).  The number \(\beta_k\) only gives a simple sufficient test for saddle behaviour.  For small \(n\), one should use the exact finite Fourier minimum.
\end{remark}

\section{The case \(k=1\)}

For completeness, consider
\[
        S_{n,1}(x)=\sum_{i=1}^n\frac{x_i}{x_{i+1}}.
\]
Here
\[
        \lambda_{m,1}=e^{i\theta_m},
\]
and therefore
\[
        \gamma_{m,1}=1-\cos\theta_m.
\]
Thus
\[
        C_{n,1}^{\rm loc}
        =1-
        \cos\frac{2\pi}{n}>0.
\]
This is consistent with the global AM--GM inequality
\[
        \sum_{i=1}^n\frac{x_i}{x_{i+1}}\ge n.
\]

\section{The case \(k=2\)}

For \(k=2\),
\[
        \lambda_{m,2}=e^{i\theta_m}+e^{2i\theta_m}.
\]
A direct computation gives
\[
        |\lambda_{m,2}|^2=2+2\cos\theta_m,
\]
and
\[
        \Ree\lambda_{m,2}=\cos\theta_m+\cos2\theta_m.
\]
Hence
\[
        |\lambda_{m,2}|^2-2\Ree\lambda_{m,2}
        =4\sin^2\theta_m,
\]
so
\[
        \boxed{
        \gamma_{m,2}=\frac12\sin^2\theta_m
        =\frac12\sin^2\frac{2\pi m}{n}.
        }
\]
Therefore
\[
        C_{n,2}^{\rm loc}
        =
        \frac12\min_{1\le m\le n-1}
        \sin^2\frac{2\pi m}{n}.
\]

\begin{theorem}[Complete classification for \(k=2\)]
For \(k=2\),
\[
        C_{n,2}^{\rm loc}
        =
        \begin{cases}
        \dfrac12\sin^2\dfrac{\pi}{n}, & n\text{ odd},\\[1.2ex]
        0, & n\text{ even}.
        \end{cases}
\]
Consequently, the equal point is a strict local minimum for odd \(n\), and it is quadratically degenerate for even \(n\).
\end{theorem}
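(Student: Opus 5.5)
We start from the boxed formula \(\gamma_{m,2}=\tfrac12\sin^2(2\pi m/n)\), which was derived just above from the Fourier symbol lemma. Then \(C_{n,2}^{\rm loc}\) is one half of the minimum of \(\sin^2(2\pi m/n)\) over \(1\le m\le n-1\). The proof reduces to elementary facts about the distance from the angles \(2\pi m/n\) to the zero set \(\pi\mathbb Z\) of \(\sin\). We treat the two parities separately.

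\emph{Even \(n\).} Take \(m=n/2\), which satisfies \(1\le m\le n-1\) because \(n\ge2\). Then \(2\pi m/n=\pi\), so \(\gamma_{n/2,2}=0\). Every \(\gamma_{m,2}\) is nonnegative, so the minimum is exactly \(0\). Part (iii) of the local spectral trichotomy then gives quadratic degeneracy. This is consistent with the periodic equality family from Theorem on periodic equality families with \(d=2\), as noted in the Remark: the alternating vectors give exact equality.

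\emph{Odd \(n\).} Here \(\sin^2(2\pi m/n)=\sin^2(\pi\cdot 2m/n)\), and \(\sin^2(\pi t)\) depends only on the distance \(\|t\|\) from \(t\) to the nearest integer. It is increasing in \(\|t\|\in[0,1/2]\).
\begin{itemize}[nosep]
\item Since \(n\) is odd and \(1\le m\le n-1\), the integer \(2m\) is not a multiple of \(n\).
\item So \(\|2m/n\|=r/n\), where \(r\) is the distance from \(2m\) to the nearest multiple of \(n\). This gives \(r\ge1\), and hence \(\sin^2(2\pi m/n)\ge\sin^2(\pi/n)\).
\item Equality is attained: choose \(m=(n+1)/2\), which lies in \([1,n-1]\) for \(n\ge3\). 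Then \(2m=n+1\) and \(\sin^2(2\pi m/n)=\sin^2(\pi+\pi/n)=\sin^2(\pi/n)\).
\end{itemize}
Hence \(C_{n,2}^{\rm loc}=\tfrac12\sin^2(\pi/n)>0\), and part (i) of the trichotomy gives a strict local minimum. (The case \(n=2\) is even, so it is covered by the first case.)

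The only step needing any care is the monotonicity argument in the odd case. We must make sure that \(\sin^2(\pi t)\) is minimized over nonintegral \(t\in\tfrac1n\mathbb Z\) at distance exactly \(1/n\) from \(\mathbb Z\). This follows from the periodicity and symmetry \(\sin^2(\pi t)=\sin^2(\pi\|t\|)\) together with the monotonicity of \(\sin\) on \([0,\pi/2]\). Everything else is a direct application of earlier results.
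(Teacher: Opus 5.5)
Your proof is correct and follows the paper's argument: both start from \(\gamma_{m,2}=\tfrac12\sin^2(2\pi m/n)\), use the mode \(m=n/2\) for even \(n\), and for odd \(n\) observe that the grid angles \(2\pi m/n\) stay at distance at least \(\pi/n\) from the zeros of \(\sin\), with equality at \(m=(n\pm1)/2\). Your version spells out details the paper leaves implicit: why \(n\nmid 2m\), the monotonicity of \(\sin^2(\pi t)\) in the distance from \(t\) to \(\mathbb Z\) (which also covers the zeros at \(0\) and \(2\pi\), not just \(\pi\)), and the explicit minimizing mode.
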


\begin{proof}
If \(n\) is odd, the closest nonzero grid angle to \(\pi\) is at distance \(\pi/n\).  Thus
\[
        \min_{1\le m\le n-1}\sin^2\frac{2\pi m}{n}
        =
        \sin^2\frac{\pi}{n}.
\]
If \(n\) is even, take \(m=n/2\).  Then \(\theta_m=\pi\) and \(\sin\theta_m=0\).
\end{proof}

For even \(n\), the degeneracy is not merely infinitesimal.  If
\[
        x_1=a,\quad x_2=b,\quad x_3=a,\quad x_4=b,\quad\ldots,
\]
then
\[
        \frac{x_i}{x_{i+1}+x_{i+2}}
        +
        \frac{x_{i+1}}{x_{i+2}+x_{i+3}}
        =
        \frac{a}{a+b}+\frac{b}{a+b}=1.
\]
Hence
\[
        S_{n,2}(x)=\frac n2
\]
on a nontrivial alternating family.  Thus no strict local stability inequality can hold for even \(n\).

For odd \(n\), the theorem yields the sharp local quadratic estimate: for every \(\varepsilon>0\), if \(x\) is sufficiently close to equality, then
\[
        S_{n,2}(x)-\frac n2
        \ge
        \left(
        \frac12\sin^2\frac{\pi}{n}-\varepsilon
        \right)
        \sum_{i=1}^n
        \left(\log x_i-\overline{\log x}\right)^2.
\]

\section{The case \(k=3\)}

For \(k=3\),
\[
        \lambda_{m,3}=e^{i\theta_m}+e^{2i\theta_m}+e^{3i\theta_m}.
\]
Put
\[
        c=\cos\theta_m.
\]
Then
\[
        |\lambda_{m,3}|^2
        =3+4\cos\theta_m+2\cos2\theta_m
        =(2c+1)^2,
\]
and
\[
        \Ree\lambda_{m,3}
        =\cos\theta_m+\cos2\theta_m+\cos3\theta_m
        =4c^3+2c^2-2c-1.
\]
Therefore
\[
        |\lambda_{m,3}|^2-3\Ree\lambda_{m,3}
        =2(1-c)(2c+1)(3c+2).
\]
Thus
\[
        \boxed{
        \gamma_{m,3}
        =
        \frac{2(1-\cos\theta_m)(2\cos\theta_m+1)(3\cos\theta_m+2)}{27}.
        }
\]
Since \(1-\cos\theta_m>0\) for \(m\ne0\), the sign is determined by
\[
        (2\cos\theta_m+1)(3\cos\theta_m+2).
\]
Hence
\[
        \gamma_{m,3}<0
        \quad\Longleftrightarrow\quad
        -\frac23<\cos\theta_m<-\frac12.
\]
Equivalently,
\[
        \gamma_{m,3}<0
        \quad\Longleftrightarrow\quad
        \frac{2\pi}{3}<\theta_m<\arccos\left(-\frac23\right),
\]
or symmetrically near \(2\pi\).

Set
\[
        \tau=\frac1{2\pi}\arccos\left(-\frac23\right).
\]
We shall use the elementary bounds
\[
        \frac4{11}<\tau<\frac{11}{30}.
\]
Indeed, since cosine is strictly decreasing on \([0,\pi]\), these inequalities are equivalent to
\[
        \cos\frac{8\pi}{11}>-\frac23,
        \qquad
        \cos\frac{11\pi}{15}< -\frac23,
\]
which may be verified directly, for instance by elementary Taylor bounds for cosine.
A negative mode exists if and only if there is an integer \(m\) such that
\[
        \frac n3<m<\tau n.
\]
Let
\[
        m_n=\left\lfloor\frac n3\right\rfloor+1.
\]
Then a negative mode exists if and only if
\[
        m_n<\tau n.
\]

We shall also use the following standard consequence of Niven's theorem \cite{Niven1956}: if \(r\in\mathbb Q\) and \(\cos(\pi r)\in\mathbb Q\), then
\[
        \cos(\pi r)\in\{0,\pm1/2,\pm1\}.
\]
Therefore the value \(\cos(2\pi m/n)=-2/3\) is impossible on the Fourier grid. Hence the only zero of \(\gamma_{m,3}\) away from the constant mode occurs at \(\cos(2\pi m/n)=-1/2\), equivalently when \(3\mid n\).

\subsection{Complete classification for \(k=3\)}

We now classify all \(n\ge4\).

First let \(n=3q\).  Then \(m_n=q+1\).  A negative mode exists if and only if
\[
        \frac{q+1}{3q}<\tau.
\]
By the bounds \(4/11<\tau<11/30\), the inequality fails at \(q=10\) and holds at \(q=11\).  Since the left-hand side is strictly decreasing, it holds exactly for \(q\ge11\).  For \(q=2,\ldots,10\), the mode \(m=q=n/3\) gives \(\cos\theta_m=-1/2\), so \(C_{n,3}^{\rm loc}=0\).

Next let \(n=3q+1\).  Then \(m_n=q+1\).  A negative mode exists if and only if
\[
        \frac{q+1}{3q+1}<\tau.
\]
Again the left-hand side is strictly decreasing in \(q\).  By the bounds \(4/11<\tau<11/30\), the inequality fails at \(q=6\), since \(7/19>11/30>\tau\), and holds at \(q=7\), since \(8/22=4/11<\tau\).  Hence it holds exactly for \(q\ge7\).  Thus the positive cases are
\[
        n=4,7,10,13,16,19.
\]

Finally let \(n=3q+2\).  Then \(m_n=q+1\).  A negative mode exists if and only if
\[
        \frac{q+1}{3q+2}<\tau.
\]
The left-hand side is strictly decreasing in \(q\).  By the bounds \(4/11<\tau<11/30\), the inequality fails at \(q=2\), since \(3/8>11/30>\tau\), and holds at \(q=3\), since \(4/11<\tau\).  Thus it holds exactly for \(q\ge3\).  The positive cases are
\[
        n=5,8.
\]

Combining the three cases, we obtain the complete classification.

\begin{theorem}[Complete classification for \(k=3\)]
For \(n\ge4\),
\[
        C_{n,3}^{\rm loc}>0
        \quad\Longleftrightarrow\quad
        n\in\{4,5,7,8,10,13,16,19\}.
\]
Moreover,
\[
        C_{n,3}^{\rm loc}=0
        \quad\Longleftrightarrow\quad
        n\in\{6,9,12,15,18,21,24,27,30\}.
\]
For all remaining \(n\ge4\),
\[
        C_{n,3}^{\rm loc}<0.
\]
Thus the equal point is a strict local minimum exactly for
\[
        n\in\{4,5,7,8,10,13,16,19\},
\]
is quadratically degenerate exactly for
\[
        n\in\{6,9,12,15,18,21,24,27,30\},
\]
and is a saddle for all other \(n\ge4\).
\end{theorem}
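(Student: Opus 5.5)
The proof will assemble the three residue-class computations carried out just before the statement, using the explicit sign formula
\[
\gamma_{m,3}=\frac{2(1-\cos\theta_m)(2\cos\theta_m+1)(3\cos\theta_m+2)}{27}
\]
together with the local spectral trichotomy. Since \(1-\cos\theta_m>0\) for \(1\le m\le n-1\), only the sign of \((2c+1)(3c+2)\) matters, with \(c=\cos\theta_m\).

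\emph{Step 1 (saddle test).} A negative coefficient occurs if and only if some grid angle satisfies \(-2/3<c<-1/2\). Using the symmetry \(m\mapsto n-m\), this is the same as the existence of an integer \(m\) with \(n/3<m<\tau n\). Since \(m_n=\lfloor n/3\rfloor+1\) is the smallest integer exceeding \(n/3\), it reduces to \(m_n<\tau n\). Applying the bounds \(4/11<\tau<11/30\) in each class \(n=3q,3q+1,3q+2\) gives the following. Negative modes exist exactly for \(q\ge11\), \(q\ge7\), \(q\ge3\) respectively. Equivalently, they fail exactly for \(n\in\{6,\dots,30\}\cap 3\mathbb Z\), for \(n\in\{4,7,10,13,16,19\}\), and for \(n\in\{5,8\}\). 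For each class I will verify that \((q+1)/(3q+r)\) is decreasing in \(q\), which holds since \(r\le 2<3\). I will also check the endpoint comparisons with the rational bounds, for instance \(11/30<(10+1)/30\) and \(12/33=4/11<\tau\).

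\emph{Step 2 (zero versus positive).} When no negative mode exists, \(C^{\rm loc}_{n,3}\ge0\), and it equals \(0\) exactly when some \(m\) gives \(c=-1/2\) or \(c=-2/3\). By the Niven consequence, \(c=-2/3\) never occurs on the grid. Moreover \(c=-1/2\) means \(\theta_m\in\{2\pi/3,4\pi/3\}\), which happens exactly when \(3\mid n\). Hence for \(3\mid n\) with \(n\le30\) we get \(C^{\rm loc}_{n,3}=0\). For the non-saddle \(n\) not divisible by \(3\), all coefficients are strictly positive, and since the minimum is over finitely many modes, \(C^{\rm loc}_{n,3}>0\). All other \(n\ge4\) have a negative mode and so are saddles. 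The interpretations (strict local minimum, degenerate, saddle) then follow directly from parts (i)--(iii) of the local spectral trichotomy.

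\emph{Main obstacle.} The only delicate point is the rigorous justification of \(4/11<\tau<11/30\), that is, \(\cos(8\pi/11)>-2/3\) and \(\cos(11\pi/15)<-2/3\). I would rewrite these as \(\cos(3\pi/11)<2/3\) and \(\cos(4\pi/15)>2/3\) and prove them with alternating Taylor bounds. For an upper bound on \(\cos y\) I would use \(1-y^2/2+y^4/24\), and for a lower bound \(1-y^2/2\), or the next term if that is needed. These would be evaluated with rational enclosures of \(\pi\) such as \(3.14<\pi<3.1416\). The margins are small, roughly \(0.005\) in the first inequality, so I expect the upper bound to need the \(y^4\) term, and I would check the arithmetic carefully.
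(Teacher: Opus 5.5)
Your proposal is correct and follows the paper's argument essentially step for step. It uses the same sign reduction to $-2/3<\cos\theta_m<-1/2$, the same test $m_n<\tau n$ with $4/11<\tau<11/30$ in the three residue classes, and the same appeal to Niven's theorem to confine zero modes to $3\mid n$. Two small slips: the $q=10$ endpoint check should read $\tau<11/30=(10+1)/30$, not $11/30<(10+1)/30$; and the tight case is $\cos(4\pi/15)>2/3$ (margin about $0.0025$), not the first inequality (margin about $0.012$). There $1-y^2/2\approx0.649$ is too weak, so you need the truncation $1-y^2/2+y^4/24-y^6/720\approx0.6691$.
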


In particular, the equal point is a saddle for all \(n\ge31\).

\subsection{The first nontrivial stable example: \(n=5\)}

For \(n=5\), the minimum is attained at the modes \(m=2\) and \(m=3\).  Since
\[
        \cos\frac{4\pi}{5}=-\frac{1+\sqrt5}{4},
\]
we get
\[
        \boxed{
        C_{5,3}^{\rm loc}
        =
        \frac{5(3-\sqrt5)}{108}.
        }
\]
Consequently, for every \(\varepsilon>0\), if \(x\) is sufficiently close to equality, then
\[
        S_{5,3}(x)-\frac53
        \ge
        \left(
        \frac{5(3-\sqrt5)}{108}-\varepsilon
        \right)
        \sum_{i=1}^5
        \left(\log x_i-\overline{\log x}\right)^2.
\]
The coefficient is sharp in the local sense.

\subsection{A local counterexample: \(n=11\)}

For \(n=11\), the mode \(m=4\) satisfies
\[
        \frac{11}{3}<4<\tau\,11.
\]
Equivalently,
\[
        -\frac23<\cos\frac{8\pi}{11}<-\frac12.
\]
Therefore
\[
        \gamma_{4,3}<0.
\]
Numerically,
\[
        C_{11,3}^{\rm loc}\approx -0.00134468.
\]
If
\[
        u_i=\cos\frac{8\pi i}{11},
\]
then, for all sufficiently small nonzero \(t\),
\[
        S_{11,3}(e^{tu})<\frac{11}{3}.
\]
Thus the lower bound \(S_{11,3}\ge11/3\) fails already locally near the equal point.

\section{Numerical illustration}

The following table illustrates the local trichotomy for \(k=3\).  The word zero below means quadratic degeneracy, i.e. absence of a positive quadratic stability constant.

\[
\begin{array}{c|c|c}
\toprule
n & C_{n,3}^{\rm loc} & \text{local type of the equal point}\\
\midrule
4 & 0.148148 & \text{strict local minimum}\\
5 & 0.035367 & \text{strict local minimum}\\
6 & 0 & \text{quadratically degenerate}\\
7 & 0.066962 & \text{strict local minimum}\\
8 & 0.006355 & \text{strict local minimum}\\
9 & 0 & \text{quadratically degenerate}\\
10 & 0.035367 & \text{strict local minimum}\\
11 & -0.001345 & \text{saddle}\\
12 & 0 & \text{quadratically degenerate}\\
13 & 0.015806 & \text{strict local minimum}\\
14 & -0.003847 & \text{saddle}\\
15 & 0 & \text{quadratically degenerate}\\
16 & 0.006355 & \text{strict local minimum}\\
17 & -0.005046 & \text{saddle}\\
18 & 0 & \text{quadratically degenerate}\\
19 & 0.002959 & \text{strict local minimum}\\
20 & -0.004826 & \text{saddle}\\
\bottomrule
\end{array}
\]

The classification theorem explains the table completely.  The last strict local minimum occurs at \(n=19\), and the last zero-quadratic case occurs at \(n=30\).

\section{Small fixed values of \(k\)}

The exact finite-grid criterion can be evaluated directly for any prescribed \(k\). The following table gives illustrative positive and zero-quadratic cases for \(3\le k\le10\). It is not used in any proof. The entries were obtained by direct evaluation of the exact finite Fourier criterion; all admissible values \(n\ge k+1\) not appearing in the second or third column are saddle cases.
\[
\begin{array}{c|l|l|c}
\toprule
k & \{n:C_{n,k}^{\rm loc}>0\} & \{n:C_{n,k}^{\rm loc}=0\} & \text{saddle for all } n\ge\\
\midrule
3 & 4,5,7,8,10,13,16,19 & 6,9,12,15,18,21,24,27,30 & 31\\
4 & 5,9,13,17 & 6,8,10,12,16,20,24 & 25\\
5 & 6,7,8,11,12,16 & 10,15,20 & 21\\
6 & 7,13,19 & 8,9,12,14,18,24 & 25\\
7 & 8,9,11,15,16,22 & 14,21 & 23\\
8 & 9,17,25 & 10,12,16,18,24 & 26\\
9 & 10,11,19,20 & 12,18,27 & 28\\
10 & 11,21 & 12,15,20,22,30 & 31\\
\bottomrule
\end{array}
\]
This table is included only as an illustration of the exact finite Fourier criterion.  It illustrates that the local behaviour is genuinely arithmetic in the pair \((n,k)\).  The cases \(k=2\) and \(k=3\) admit the cleanest closed classifications, while larger fixed \(k\) are still completely decidable by the same spectral rule.

\section{The sharp Sadov constant}

We now turn from the local problem to Sadov's global normalized constant. In this section we follow Sadov's global range \(n\ge k\ge1\), where the endpoint case \(k=n\) is allowed. This differs from the local spectral sections above, where we assume \(1\le k\le n-1\).

Define
\[
        C=
        \inf_{n\ge k\ge1}\ \inf_{x_i>0}
        \frac{k}{n}
        \sum_{i=1}^n
        \frac{x_i}{x_{i+1}+\cdots+x_{i+k}},
\]
where the indices are cyclic. Sadov uses the normalization
\[
        A_{n,k}=\inf_{x_i>0} k\sum_{i=1}^n
        \frac{x_i}{x_{i+1}+\cdots+x_{i+k}}.
\]
Thus \(A_{n,k}/n\) is exactly the inner normalized quantity appearing in the definition of \(C\). Sadov proved
\[
        \frac{A_{n,k}}{n}\ge k(2^{1/k}-1).
\]
Consequently
\[
        C\ge \inf_{k\ge1}k(2^{1/k}-1)=\log2.
\]
We prove the reverse inequality by an explicit asymptotic construction; together these two facts determine the constant exactly.

For a residue class modulo \(N\), write \(|i|_N\) for its distance to \(0\), i.e.
\[
        |i|_N=\min_{q\in\mathbb Z}|i-qN|.
\]

\begin{lemma}[Concentration of the periodic exponential weights]
Let
\[
        y_i=\exp\left(N\cos\frac{2\pi i}{N}\right),
        \qquad
        P_N=\sum_{i=0}^{N-1}y_i,
        \qquad
        p_i=\frac{y_i}{P_N}.
\]
Let
\[
        s_N=\lfloor N^{2/3}\rfloor,
        \qquad
        h_N=\lfloor N^{7/12}\rfloor.
\]
Then
\[
        \max_i p_i\to0,
\]
\[
        \sum_{|i|_N>h_N}p_i\to0,
\]
and
\[
        \sum_{s_N-h_N\le |i|_N\le s_N+h_N}p_i\to0.
\]
\end{lemma}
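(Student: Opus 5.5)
The plan is to compare everything to the Gaussian profile \(\exp(-2\pi^2 i^2/N)\).  For \(|i|_N=j\) we have \(N\cos(2\pi j/N)=N-2\pi^2j^2/N+O(j^4/N^3)\), so up to the common factor \(e^N\) the weight \(y_i\) behaves like \(e^{-2\pi^2 j^2/N}\), a Gaussian of width \(\sigma_N=\sqrt N/(2\pi)\).  Since \(h_N\approx N^{7/12}\gg N^{1/2}\), the whole mass should lie within distance \(h_N\) of \(0\).  Since \(s_N\approx N^{2/3}\gg h_N\), the window around \(s_N\) should lie far out in the tail.  All three claims reduce to comparing \(P_N\) with the Gaussian sum.

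First I bound \(P_N\) from below.  For \(j\le \sqrt N\), the inequality \(\cos t\ge 1-t^2/2\) gives
\[
y_i\ge e^{N}e^{-2\pi^2 j^2/N}\ge e^{N}e^{-2\pi^2}.
\]
There are at least \(\sqrt N\) such indices, so \(P_N\ge c\sqrt N\,e^N\) with \(c=e^{-2\pi^2}\).  Together with \(y_i\le e^N\) this yields \(\max_i p_i\le 1/(c\sqrt N)\to0\).

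Next I bound the tails from above.  On \(0\le t\le\pi\) one has \(1-\cos t\ge 2t^2/\pi^2\), since \(\sin(t/2)\ge t/\pi\) on that range.  With \(t=2\pi j/N\) and \(j=|i|_N\le N/2\), this gives
\[
y_i\le e^{N}e^{-8j^2/N}.
\]
Hence
\[
\sum_{|i|_N>h_N}p_i\le \frac{2}{c\sqrt N}\sum_{j>h_N}e^{-8j^2/N}\le \frac{2}{c\sqrt N}\cdot N\,e^{-8h_N^2/N}.
\]
Because \(h_N^2/N\sim N^{1/6}\to\infty\), the right-hand side is of order \(\sqrt N\,e^{-8N^{1/6}}\to0\).

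For the window, the indices with \(s_N-h_N\le |i|_N\le s_N+h_N\) satisfy \(|i|_N\ge s_N-h_N\ge h_N\) for large \(N\), since \(s_N/h_N\to\infty\).  So for large \(N\) the window is contained in the region \(|i|_N\ge h_N\), and the same bound applies (adding at most the boundary terms, which are equally small).  In fact it decays like \(e^{-cN^{1/3}}\), which is much faster.

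There is no real obstacle in this argument.  The only point needing care is to use the global quadratic inequality \(1-\cos t\ge 2t^2/\pi^2\) on all of \([0,\pi]\), rather than a Taylor expansion.  That way the far tail, where \(|i|_N\) is comparable to \(N\), is controlled uniformly, and no separate estimate near \(|i|_N\approx N/2\) is required.
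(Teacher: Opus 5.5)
Your proof is correct and follows essentially the same route as the paper: you bound \(P_N\) below by the roughly \(\sqrt N\) central terms, each of size at least a constant times \(e^N\), and you bound each \(y_i\) above by a Gaussian using a global inequality \(1-\cos t\ge ct^2\) on \([-\pi,\pi]\). The tail and window estimates then follow from the same tail bound. The only difference is that you make the constants explicit (\(c=e^{-2\pi^2}\) and \(1-\cos t\ge 2t^2/\pi^2\)), where the paper leaves them unspecified.
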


\begin{proof}
For \(|t|\le\pi\) there is an absolute constant \(c>0\) such that
\[
        1-\cos t\ge c t^2.
\]
Thus, for \(|i|_N\le N/2\),
\[
        y_i
        \le
        e^N\exp\left(-c\frac{i^2}{N}\right),
\]
with \(i\) represented symmetrically.  On the other hand, for \(|i|\le c_0\sqrt N\), with fixed sufficiently small \(c_0>0\), one has
\[
        y_i\ge c_1 e^N.
\]
Therefore
\[
        P_N\ge c_2 e^N\sqrt N.
\]
Consequently
\[
        p_i\le \frac{C}{\sqrt N}
        \exp\left(-c\frac{|i|_N^2}{N}\right).
\]
This immediately gives \(\max_i p_i=O(N^{-1/2})\to0\).  Since
\[
        \frac{h_N}{\sqrt N}=N^{1/12}\to\infty,
\]
the Gaussian tail bound gives
\[
        \sum_{|i|_N>h_N}p_i\to0.
\]
Finally,
\[
        s_N-h_N\sim N^{2/3},
        \qquad
        \frac{s_N-h_N}{\sqrt N}\to\infty,
        \qquad
        s_N+h_N=o(N),
\]
so the same tail estimate gives
\[
        \sum_{s_N-h_N\le |i|_N\le s_N+h_N}p_i\to0.
\]
\end{proof}

\begin{lemma}[A discrete logarithmic sum]
Let \(a_N\le b_N\) be integers and let \(Z_i\ge1\) be positive numbers satisfying
\[
        Z_{i-1}\ge Z_i
        \qquad (a_N\le i\le b_N),
\]
\[
        \max_{a_N\le i\le b_N}(Z_{i-1}-Z_i)\to0,
\]
and
\[
        \sum_{i=a_N}^{b_N}(Z_{i-1}-Z_i)=O(1).
\]
Then
\[
        \sum_{i=a_N}^{b_N}\frac{Z_{i-1}-Z_i}{Z_i}
        =
        \log Z_{a_N-1}-\log Z_{b_N}+o(1).
\]
\end{lemma}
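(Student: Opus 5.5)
Write $\Delta_i=Z_{i-1}-Z_i\ge0$ and $r_i=\Delta_i/Z_i$, so each term of the sum is $r_i$. The plan is to compare $r_i$ with the exact logarithmic increment
\[
        \log Z_{i-1}-\log Z_i=\log(1+r_i).
\]
The sum $\sum_{i=a_N}^{b_N}\log(1+r_i)$ telescopes exactly to $\log Z_{a_N-1}-\log Z_{b_N}$. The statement therefore reduces to showing
\[
        \sum_{i=a_N}^{b_N}\bigl(r_i-\log(1+r_i)\bigr)=o(1).
\]

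For $r\ge0$ we have the elementary bound $0\le r-\log(1+r)\le r^2/2$. Since $Z_i\ge1$, we get $0\le r_i\le\Delta_i$. Hence
\[
        0\le\sum_{i=a_N}^{b_N}\bigl(r_i-\log(1+r_i)\bigr)
        \le\frac12\sum_{i=a_N}^{b_N}\Delta_i^2
        \le\frac12\Bigl(\max_{a_N\le i\le b_N}\Delta_i\Bigr)\sum_{i=a_N}^{b_N}\Delta_i .
\]
By hypothesis the maximum tends to $0$ and the total increment $\sum\Delta_i$ is $O(1)$. The right-hand side is therefore $o(1)$, which gives the claimed asymptotic.

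There is no genuine obstacle here. The only points to check are three. The monotonicity hypothesis gives $\Delta_i\ge0$, so the inequality $r-\log(1+r)\le r^2/2$ applies with $r\ge0$. The lower bound $Z_i\ge1$ is exactly what converts the smallness of $\Delta_i$ into smallness of $r_i$. The telescoping uses $Z_{a_N-1}$ as the starting value, matching the index shift in the sum.
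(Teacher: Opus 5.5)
Your proof is correct and follows the paper's argument: you set $r_i=(Z_{i-1}-Z_i)/Z_i$, observe that $\sum\log(1+r_i)$ telescopes to $\log Z_{a_N-1}-\log Z_{b_N}$, and control the discrepancy by a sum of squares bounded by (max increment)$\times$(total increment)$\to0$, using $Z_i\ge1$. The only cosmetic difference is that you use the global inequality $0\le r-\log(1+r)\le r^2/2$ for $r\ge0$ and bound via $\Delta_i$ directly, whereas the paper uses the uniform Taylor estimate $\log(1+r_i)=r_i+O(r_i^2)$; this makes your version slightly more explicit.
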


\begin{proof}
Put
\[
        r_i=\frac{Z_{i-1}-Z_i}{Z_i}\ge0.
\]
Since \(Z_i\ge1\), we have \(\max_i r_i\to0\) and
\[
        \sum_i r_i=O(1).
\]
Taylor's formula gives, uniformly for \(r_i\to0\),
\[
        \log(1+r_i)=r_i+O(r_i^2).
\]
Moreover,
\[
        \sum_i r_i^2
        \le
        \left(\max_i r_i\right)\sum_i r_i
        \to0.
\]
Hence
\[
        \sum_{i=a_N}^{b_N}r_i
        =
        \sum_{i=a_N}^{b_N}\log(1+r_i)+o(1).
\]
But
\[
        \log(1+r_i)
        =
        \log\frac{Z_{i-1}}{Z_i}
        =
        \log Z_{i-1}-\log Z_i.
\]
The sum telescopes, proving the claim.
\end{proof}

\begin{theorem}[Sharp value of the global constant]
With \(C\) as above,
\[
        C=\log2.
\]
\end{theorem}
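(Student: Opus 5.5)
The plan is to prove the two inequalities \(C\ge\log2\) and \(C\le\log2\) separately. The lower bound is already in hand. Sadov's inequality \(A_{n,k}/n\ge k(2^{1/k}-1)\) holds for every admissible pair. The function \(k\mapsto k(2^{1/k}-1)\) is decreasing in \(k\) with limit \(\log 2\), so \(C\ge\log2\). All the work is in the upper bound. There I will exhibit admissible triples \((n,k,x)\), with \(n,k\to\infty\) along a sequence indexed by \(N\), such that
\[
        \frac kn S_{n,k}(x)\le \log 2+o(1).
\]

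For the construction I will take the cycle length to be \(n=N\), the window length to be \(k=s_N=\lfloor N^{2/3}\rfloor\), and the vector to be a mixture \(x_i=p_i+\eta_N\). Here \(p_i\) are the concentrated periodic weights of the concentration lemma. The small uniform background \(\eta_N\) will be fixed later so that the background carries total mass of order comparable to the bump's. I then split the sum \(S_{N,k}\) into three groups of indices:
\begin{itemize}[nosep]
\item the indices \(i\) whose window \(\{i+1,\dots,i+k\}\) contains essentially all of the bump mass;
\item the indices whose window misses the bump entirely;
\item the transition indices, where the window edge crosses the bump, i.e.\ \(|i|_N\) lies near \(0\) or near \(s_N\).
\end{itemize}
The concentration lemma controls the transition set through its three statements. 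The tail statement \(\sum_{|i|_N>h_N}p_i\to0\) says the bump has effective width \(h_N\ll s_N\). The annulus statement, which bounds the mass in \(s_N-h_N\le|i|_N\le s_N+h_N\), says the bump contributes almost nothing to the numerators at window distance. The bound \(\max p_i\to0\) says that single terms are negligible.

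In the transition region the denominators \(Z_i=\sum_{j=1}^{k}x_{i+j}\) are monotone in \(i\), and the numerator satisfies \(x_i\approx Z_{i-1}-Z_i\) up to the term \(x_{i+k}\). That term is negligible because it lies at distance about \(s_N\) from the bump, which is exactly where the annulus statement applies. After rescaling so that \(Z\ge1\), the discrete logarithmic sum lemma gives
\[
        \sum \frac{x_i}{Z_i}=\log\frac{Z_{\text{start}}}{Z_{\text{end}}}+o(1).
\]
With the background normalised so that a window contains background mass equal to the bump mass, the denominator drops from \(2\) to \(1\) across the bump, and this contributes exactly \(\log 2\). The remaining indices contribute about \(\eta_N\cdot N/(k\eta_N)=N/k\) each. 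I then tune the proportion between the bump part and the background part so that the normalised total \((k/N)S\) equals \(\log2\) plus lower-order errors. If a single bump gives the wrong balance, I will use a \(q\)-fold periodic repetition of the bump, with spacing \(N/q\), so that the number of \(\log2\) contributions scales like \(n/k\).

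The main obstacle is the bookkeeping in this last step. One must check that the background terms do not contribute a full extra \(1\) per window length, which would push the normalised value up to \(1+\log2\) or to \(1\) instead of \(\log 2\). That is, the construction must be arranged so that essentially every index lies in a transition zone, where the ratio is governed by the logarithmic lemma, rather than in a flat zone, where each ratio is \(1/k\). My first attempt will be a chain of bumps spaced exactly \(k\) apart, so that every window always contains one bump entering while another leaves. The denominators then oscillate between \(Z\) and \(2Z\) in a self-similar geometric way, and each period contributes \(\log 2\) per \(k\) indices. The concentration lemma, with \(h_N\ll s_N=k\), is what makes the entering and leaving bumps non-overlapping. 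The discrete log lemma converts each passage into \(\log2+o(1)\), uniformly in the number of periods.
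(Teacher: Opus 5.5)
Your lower bound is fine, but neither version of the upper-bound construction can reach $\log 2$. In the single-bump version ($n=N$, $k=s_N$, $x_i=p_i+\eta_N$ with your normalisation $k\eta_N=1$), each of the $N-O(s_N)$ indices whose window misses the bump has ratio $(1+o(1))\,\eta_N/(k\eta_N)=(1+o(1))/k$. This is your own count $N/k$, so $\frac kN S_{N,k}(x)\to 1$. Changing the proportion does not help, because the flat-region ratio $\eta_N/(k\eta_N)$ does not depend on $\eta_N$.

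The fallback, a chain of bumps spaced exactly $k$ apart, gives the same value. The vector is then $k$-periodic with $k\mid n$, so the paper's periodic equality theorem gives $S_{n,k}(x)=n/k$ exactly, i.e.\ normalised value $1$. The denominators do not oscillate between $Z$ and $2Z$; they are constant, because the bump leaving the window at its left end is replaced at the same instant by the one entering at its right end. Giving equal-shaped bumps different masses does not rescue this. A passage with mass ratio $\rho_j$ between consecutive bumps contributes $\log\rho_j/(\rho_j-1)$. Since $\prod_j\rho_j=1$ around the cycle and $t\mapsto t/(e^t-1)$ is convex with value $1$ at $0$, the average is at least $1$. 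Getting $\log 2$ would require $\rho_j=2$ at every step, which is impossible on a cycle.

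The missing idea is that the window must be \emph{slightly longer} than the period, with no uniform background at all. The paper takes $n=2N$, the $N$-periodic vector $x_i=\exp(N\cos(2\pi i/N))$ (one bump per period), and $k=N+s_N$ with $\sqrt N\ll s_N\ll N$. Every window then contains one full period plus a short window of length $s_N$. Hence $x_{i+1}+\cdots+x_{i+k}=P_N(1+W_i)$ with $W_i=\sum_{r=1}^{s_N}p_{i+r}$, and $\frac kn S_{n,k}(x)=(1+s_N/N)\sum_{i=0}^{N-1}p_i/(1+W_i)$.

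The full period acts as the ``background'' in every denominator but adds nothing to the numerators, whose mass sits entirely in the transition zone. As the short window sweeps across the bump, $1+W_i$ falls from $2$ to $1$. From there your transition-zone argument goes through essentially verbatim. By periodicity $x_{i+k}=x_{i+s_N}$, so $W_{i-1}-W_i=p_i-p_{i+s_N}\ge 0$, and the annulus estimate disposes of $\sum p_{i+s_N}$. The discrete logarithmic lemma then gives $\log 2$. Two periods are needed only because $n\ge k>N$.
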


\begin{proof}
It is enough to prove \(C\le\log2\).  Let \(N\to\infty\), put
\[
        n=2N,
        \qquad
        k=N+s_N,
        \qquad
        s_N=\lfloor N^{2/3}\rfloor.
\]
Define
\[
        x_i=\exp\left(N\cos\frac{2\pi i}{N}\right),
        \qquad i=0,1,\ldots,2N-1.
\]
Then \(x_{i+N}=x_i\).  On one period write
\[
        y_i=\exp\left(N\cos\frac{2\pi i}{N}\right),
        \qquad i\in\mathbb Z/N\mathbb Z,
\]
\[
        P_N=\sum_{i=0}^{N-1}y_i,
        \qquad
        p_i=\frac{y_i}{P_N}.
\]
Thus \(p_i\) is a probability distribution on \(\mathbb Z/N\mathbb Z\).  Define
\[
        W_i=\sum_{r=1}^{s_N}p_{i+r},
\]
with indices modulo \(N\).  Since the sequence is \(N\)-periodic, the block
\[
        i+1,i+2,\ldots,i+N
\]
contains exactly one full period.  The remaining terms
\[
        i+N+1,\ldots,i+N+s_N
\]
coincide, by periodicity, with the window
\[
        i+1,\ldots,i+s_N
\]
on one period.  Hence each denominator of length \(N+s_N\) contains one full period and one additional window of length \(s_N\):
\[
        x_{i+1}+\cdots+x_{i+N+s_N}
        =P_N(1+W_i).
\]
Therefore
\[
        S_{2N,N+s_N}(x)
        =2\sum_{i=0}^{N-1}\frac{p_i}{1+W_i}.
\]
Consequently
\[
        \frac{N+s_N}{2N}S_{2N,N+s_N}(x)
        =
        \left(1+\frac{s_N}{N}\right)T_N,
\]
where
\[
        T_N=\sum_{i=0}^{N-1}\frac{p_i}{1+W_i}.
\]
Since \(s_N/N\to0\), it remains to prove
\[
        T_N\to\log2.
\]

Let
\[
        h_N=\lfloor N^{7/12}\rfloor,
        \qquad
        a_N=-h_N,
        \qquad
        b_N=h_N.
\]
We identify the interval \([a_N,b_N]\) with its natural representatives in \(\mathbb Z/N\mathbb Z\).  Since
\[
        h_N=o(s_N),
        \qquad
        s_N+h_N=o(N),
\]
there is no wrap-around in the windows used below, for all sufficiently large \(N\).

By the concentration lemma,
\[
        \sum_{i\notin[a_N,b_N]}p_i\to0.
\]
Since \(1+W_i\ge1\), the contribution of the complement of \([a_N,b_N]\) to \(T_N\) is \(o(1)\).  Thus
\[
        T_N=
        \sum_{i=a_N}^{b_N}\frac{p_i}{1+W_i}+o(1).
\]

The window defining \(W_{a_N-1}\) is
\[
        (a_N-1,\,a_N-1+s_N]
        =[-h_N,\,-h_N-1+s_N],
\]
and this interval contains \([-h_N,h_N]\) for all large \(N\).  Hence, by the concentration lemma,
\[
        W_{a_N-1}\to1.
\]
Similarly, the window defining \(W_{b_N}\) is
\[
        (h_N,\,h_N+s_N],
\]
which is disjoint from the central concentration region and lies in the tail; hence
\[
        W_{b_N}\to0.
\]

For \(a_N\le i\le b_N\) we have the exact identity
\[
        W_{i-1}-W_i=p_i-p_{i+s_N}.
\]
Moreover,
\[
        s_N-h_N>|i|,
        \qquad
        s_N+h_N=o(N),
\]
so \(|i+s_N|_N>|i|_N\) and both residues lie in the range where \(p_j\) decreases with distance from \(0\).  Therefore
\[
        p_i\ge p_{i+s_N},
        \qquad a_N\le i\le b_N,
\]
for all large \(N\).  Also, by the concentration lemma,
\[
        \sum_{i=a_N}^{b_N}p_{i+s_N}
        \le
        \sum_{s_N-h_N\le |j|_N\le s_N+h_N}p_j
        \to0.
\]
Thus
\[
        T_N
        =
        \sum_{i=a_N}^{b_N}\frac{W_{i-1}-W_i}{1+W_i}+o(1).
\]

Put
\[
        Z_i=1+W_i.
\]
Then \(Z_i\ge1\), and on \([a_N,b_N]\)
\[
        Z_{i-1}-Z_i=W_{i-1}-W_i\ge0.
\]
Moreover
\[
        Z_{i-1}-Z_i=p_i-p_{i+s_N}\le p_i,
\]
so
\[
        \max_{a_N\le i\le b_N}(Z_{i-1}-Z_i)\le \max_i p_i\to0.
\]
Also
\[
        \sum_{i=a_N}^{b_N}(Z_{i-1}-Z_i)
        =W_{a_N-1}-W_{b_N}=O(1).
\]
The discrete logarithmic lemma gives
\[
        \sum_{i=a_N}^{b_N}\frac{W_{i-1}-W_i}{1+W_i}
        =
        \log Z_{a_N-1}-\log Z_{b_N}+o(1).
\]
Since
\[
        Z_{a_N-1}=1+W_{a_N-1}\to2,
        \qquad
        Z_{b_N}=1+W_{b_N}\to1,
\]
we get
\[
        T_N\to\log2.
\]
Consequently
\[
        \frac{N+s_N}{2N}S_{2N,N+s_N}(x)\to\log2.
\]
Thus \(C\le\log2\).  Combining this with Sadov's lower bound \(C\ge\log2\), we obtain \(C=\log2\).
\end{proof}
\begin{remark}
The proof shows where the logarithm appears.  The additional window of length \(s_N\) moves across a sharply concentrated period.  Its normalized mass drops from \(1\) to \(0\), and the main sum becomes a discrete logarithmic integral
\[
        \sum \frac{dW}{1+W}\to \int_0^1\frac{dW}{1+W}=\log2.
\]
\end{remark}

\section{Relation between the global and local results}

The identity \(C=\log2\) concerns the global normalized infimum over all admissible triples \((n,k,x)\).  It does not settle the individual fixed-\(k\) asymptotic constants, which are separate and generally much harder problems.  The Fourier analysis in the preceding sections concerns a different question: the local behaviour of each fixed sum \(S_{n,k}\) near its equal point.  These two parts are complementary.

The global construction in the proof of \(C=\log2\) is not a small perturbation of the equal point.  It uses highly concentrated periodic weights.  By contrast, the local theory describes the Hessian at equality and identifies when equality is locally stable, quadratically degenerate, or unstable.  In particular, when \(C_{n,k}^{\rm loc}<0\), the equal point is not even locally minimizing, so global extremal configurations cannot be controlled by a neighbourhood of equality.  When \(C_{n,k}^{\rm loc}>0\), any global descent below \(n/k\) must occur away from equality.

\section{Conclusion}

We proved two complementary results for Shapiro--Diananda cyclic sums.  First, the global Sadov constant satisfies
\[
        C=\log2.
\]
The upper bound is obtained by an explicit asymptotic construction with two identical sharply concentrated periods and window length \(N+s_N\), where \(s_N\to\infty\), \(s_N/\sqrt N\to\infty\), and \(s_N/N\to0\).  The additional window moves across one concentration peak and produces the limiting logarithmic integral \(\int_0^1(1+u)^{-1}\,du=\log2\).

Second, we computed the exact local quadratic stability of
\[
        S_{n,k}(x)=\sum_{i=1}^n\frac{x_i}{x_{i+1}+\cdots+x_{i+k}}
\]
at the equal point.  The logarithmic parametrization \(x_i=e^{u_i}\) turns the Hessian into a circulant quadratic form.  Its Fourier symbol is
\[
        \gamma_{m,k}
        =
        \frac{|\lambda_{m,k}|^2-k\Ree\lambda_{m,k}}{k^3},
        \qquad
        \lambda_{m,k}=\sum_{j=1}^k e^{2\pi imj/n}.
\]
This gives the sharp local quadratic coefficient
\[
        C_{n,k}^{\rm loc}=\min_{1\le m\le n-1}\gamma_{m,k}.
\]
The sign of this constant gives a complete local trichotomy: strict local minimum, quadratic degeneracy, or saddle.

The cases \(k=2\) and \(k=3\) were classified completely.  For \(k=2\), the equal point is locally stable exactly when \(n\) is odd.  For \(k=3\), it is locally stable only for
\[
        n\in\{4,5,7,8,10,13,16,19\},
\]
quadratically degenerate for
\[
        n\in\{6,9,12,15,18,21,24,27,30\},
\]
and a saddle for all other \(n\ge4\).  For every fixed \(k\ge3\), the equal point is a saddle for all sufficiently large \(n\); we also give an explicit finite-grid saddle criterion and small-\(k\) classifications.


\begin{thebibliography}{9}

\bibitem{Diananda1962}
P. H. Diananda,
\newblock Some cyclic and other inequalities,
\newblock \emph{Mathematical Proceedings of the Cambridge Philosophical Society} \textbf{58} (1962), no. 2, 184--190.

\bibitem{Drinfeld1971}
V. G. Drinfel'd,
\newblock A cyclic inequality,
\newblock \emph{Mathematical Notes of the Academy of Sciences of the USSR} \textbf{9} (1971), 68--71.

\bibitem{Troesch1989}
B. A. Troesch,
\newblock The validity of Shapiro's cyclic inequality,
\newblock \emph{Mathematics of Computation} \textbf{53} (1989), no. 188, 657--664.

\bibitem{BushellMcLeod2002}
P. J. Bushell and J. B. McLeod,
\newblock Shapiro's cyclic inequality for even \(n\),
\newblock \emph{Journal of Inequalities and Applications} \textbf{7} (2002), no. 3, 331--348.

\bibitem{Niven1956}
I. Niven,
\newblock \emph{Irrational Numbers},
\newblock Carus Mathematical Monographs, no. 11, Mathematical Association of America, 1956.

\bibitem{Sadov2016}
S. Sadov,
\newblock Lower bound for cyclic sums of Diananda type,
\newblock \emph{Archiv der Mathematik} \textbf{106} (2016), no. 2, 135--144.

\bibitem{Sadov2021}
S. Sadov,
\newblock Three steps away from Shapiro's problem: lower bounds for graphic sums with functions `max' or `min' in denominators,
\newblock arXiv:2106.10877, 2021.

\bibitem{Sadov2022}
S. Sadov,
\newblock Beyond Shapiro's problem: from cyclic sums to ``graphic'' sums,
\newblock arXiv:2212.05968, 2022.

\end{thebibliography}
\end{document}